\providecommand{\U}[1]{\protect\rule{.1in}{.1in}}
\newtheorem{theorem}{Theorem}[section]
\newtheorem{corollary}[theorem]{Corollary}
\newtheorem{assumption}[theorem]{Assumption}
\newtheorem{lemma}[theorem]{Lemma}
\newtheorem{proposition}[theorem]{Proposition}
\newtheorem{remark}[theorem]{Remark}
\numberwithin{equation}{section}
\begin{document}

\title{A Stochastic Maximum Principle for Forward-Backward Stochastic
Control Systems with Quadratic Generators and Sample-wise Constraints}
\author{Shaolin Ji\thanks{Zhongtai Securities Institute for Financial Studies,
Shandong University, 250100 Jinan, China. Email: jsl@sdu.edu.cn 
Research supported by the National Natural Science
Foundation of China (No. 11971263; 11871458).}
\and Rundong Xu\thanks{School of Mathematical Science,
Fudan University, 200433 Shanghai, China. Email:
rundong\_xu@fudan.edu.cn (Corresponding author)} }
\maketitle

\textbf{Abstract}. This paper examines the stochastic maximum principle (SMP) for a forward-backward stochastic
control system where the backward state equation is characterized by the backward stochastic differential equation (BSDE) with quadratic growth 
and the forward state at the terminal time is constrained in a convex set with probability one. 
With the help of the theory of BSDEs with quadratic growth and the bounded mean oscillation (BMO) martingales, 
we employ the terminal perturbation approach and Ekeland's variational principle to obtain a dynamic stochastic maximum principle.
The main result has a wide range of applications in mathematical finance and we investigate a robust recursive utility maximization problem
with bankruptcy prohibition as an example.

{\textbf{Key words}. }BMO martingales; quadratic backward stochastic differential equation (quadratic BSDE); Ekeland's variational principle; 
maximum principle; state constraints

\textbf{AMS subject classifications.} 93E20, 60H10, 49K45

\addcontentsline{toc}{section}{\hspace*{1.8em}Abstract}

\section{Introduction}

The class of backward stochastic differential equations (BSDEs), with generators having a quadratic growth in the state variable $z$, has attracted much attention in the past two decades.
Besides the increasingly developed and enriched existence and uniqueness theory \cite{Barr-ElKar, HuBSDEquad08, Delbaen-Hu-Richou, Hu-Tang2016, Kobylanski, Tevzadze, Xing18}, BSDEs with quadratic growth have found
applications in stochastic control and mathematical finance, 
say, stochastic linear-quadratic control with random coefficients \cite{Bismut}, utility maximization problems \cite{Delbaen02, Hu-Im-Mull} etc.


In this paper, motivated especially by their applications in the risk-sensitive optimal control problems \cite{Karoui-Hamadene, Lim-Zhou, Moon2021} and 
the robust portfolio-consumption optimization model under model uncertainty \cite{Skiadas-2001} together with the related recursive utility maximization problems \cite{Matoussi2011} with the bankruptcy prohibition,
we are encountering with the following stochastic recursive optimal control problems involving BSDEs with quadratic growth and state constraints with sample-wise type 
(a sample-wise constraint requires that the state at certain time or at all times be in a prescribed set with probability $1$). 
Denote by $\mathcal{U}[0,T]$ the set of all the admissible controls and the cost functional is defined by the following mixed initial-terminal type (see \cite{Yong2010})
\begin{equation}
    \begin{array}
    [c]{rl}%
    J(u(\cdot)):=\mathbb{E}\left[h(X_{T}^{u},Y_{0}^{u})\right],
    \end{array}
    \label{intro-cost-func}
\end{equation}
where $h$ is any given smooth function, and $X^{u}(\cdot)$, $Y^{u}(\cdot)$ are the solutions to the controlled forward-backward stochastic differential equation (FBSDE, see \cite{HuYing-Peng95, Ma-Yong-Protter, Ma-WZZ})
\begin{equation}
    \left\{
    \begin{array}
    [c]{rl}%
    dX_{t}^{u}= & b(t,X_{t}^{u},u_{t})dt+\sigma(t,X_{t}^{u},u_{t})dW_{t},\\
    dY_{t}^{u}= & -f(t,X_{t}^{u},Y_{t}^{u},Z_{t}^{u},u_{t})dt+\left(  Z_{t}%
    ^{u}\right)  ^{\intercal}dW_{t},\text{  }t\in [0,T],\\
    X_{0}^{u}= & x_{0},\ Y_{T}^{u}=\Phi(X_{T}^{u}),
    \end{array}
    \right.
    \label{intro-FBSDE}
\end{equation}
where $W$ is a standard Brownian motion, the coefficients $b$, $\sigma$, $f$, $\Phi$ are deterministic, measurable functions in suitable sizes, and $f$ is quadratic growth in $z$.
The objective is to find $\bar{u}(\cdot) \in \mathcal{U}[0,T]$ (if it ever exists) such that
\begin{equation}
    \begin{array}
    [c]{rl}%
    J(\bar{u}(\cdot)) := \inf\limits_{u(\cdot) \in \mathcal{U}[0,T]} J(u(\cdot)),
    \end{array}
    \label{intro-objective}
\end{equation}
and the terminal state $X^{u}_{T}$ of the stochastic differential equation (SDE) in (\ref{intro-FBSDE}) is required to take values in a given convex set $K \subseteq \mathbb{R}^{n}$ ($n\in \mathbb{N}_{+}$) with probability one.
On the one hand, when $K=\mathbb{R}^{n}$, $h(x,y)=y$, $f(t,x,y,z,u)=\frac{\gamma}{2}\left\vert z \right\vert^{2} + g(t,x,u)$ with some $\gamma>0$ and measurable function $g$, if the coefficients admit enough integrability then 
(\ref{intro-FBSDE})-(\ref{intro-objective}) is closely related to the classical risk-sensitive control problems \cite{Karoui-Hamadene, Lim-Zhou} by using
exponential transformation and It\^{o}'s formula. After that Moon \cite{Moon2021} studied the generalized case if $g$ depends on $(y,z)$ through the dynamic programming approach.
On the other hand, under the setting of Brownian filtration and for any given $u(\cdot) \in \mathcal{U}[0,T]$, when $n=1$, $K=[0, +\infty)$, $h(x,y)=-y$, $f(t,x,y,z,u)=U(u) - \beta y - \frac{1}{2\theta}\left\vert z \right\vert^{2} $, and if $\mathcal{U}[0,T]$ represents the set of consumption-portfolio strategies $u(\cdot)$ feasible for the initial wealth $x_{0} \geq 0$,
it follows from the main result in \cite{Skiadas-2001} that $-J(u(\cdot))$ is optimal for the minimization part of the $\mathrm{sup}$-$\mathrm{inf}$ problem proposed in \cite{Matoussi2011} thanks to the method of dual representation (see also [Quenez03]), 
where $\theta$ is the risk-averse parameter and $U$ is the utility function. 
Furthermore, this means that the objective (\ref{intro-objective}) is equivalent to the maximization part of the sup-inf problem in \cite{Matoussi2011} .

The existence of constraints with sample-wise type as above is more a rule than an exception in reality, for example, 
in the continuous-time mean-variance portfolio selection problem \cite{Pliska} and the recursive utility maximization problems \cite{Karoui-Peng-Quenez-2001} with bankruptcy prohibition. 
Another important example is the study of the Neyman-Pearson lemma for hypothesis tests under a class of nonlinear probability measures---$g$-probabilities \cite{Ji-Zhou-2010}, 
where the setting $K=[0,1]$ plays a role as a criterion to exclude the tests that make the $g$-probability of Type I error beyond the given acceptable significance level.

Based on the above motivation, we are aimed at deducing the necessary condition of the optimality---stochastic maximum principle (SMP) for problem (\ref{intro-FBSDE})-(\ref{intro-objective}).
Since Peng \cite{Peng90} obtained the general SMP for the classical stochastic control systems, 
researchers have made progress in the SMP for coupled forward-backward stochastic control systems (see \cite{HuJiXue18, Peng93, Wu2013, Yong2010} and references therein) driven by FBSDEs when $K = \mathbb{R}^{n}$. 
For the case $K \subsetneq \mathbb{R}^{n}$, (\ref{intro-FBSDE})-(\ref{intro-objective}) is well studied \cite{Ji-Peng-2008, Ji-Zhou-2006} when $f$ is globally Lipschitz continuous in $(x,y,z)$
and it is generalized to the fully coupled case \cite{Wei2013} and to the mean-field case \cite{Wei2016}. In the existing literature, there are two main approaches to getting the SMP, one is based on the pure BSDE approach \cite{Karoui-Peng-Quenez-2001} and 
the other is based on the Ekeland variational principle \cite{Ji-Peng-2008, Ji-Zhou-2006, Wei2013}. Particularly, adopting the BSDE method, the authors in \cite{Matoussi2011} establish a comparison theorem for specific BSDEs with quadratic growth and derive a dynamic SMP in the semimartingale context.
However, the comparison theorem may not hold since we do not require $f$ have special structures or convexity/concavity in $z$, and therefore we will resort to the Ekeland variational principle to achieve this goal under our framework.

We encountered three difficulties in deducing the SMP for (\ref{intro-FBSDE})-(\ref{intro-objective}).
The first one is that the BSDE in (\ref{intro-FBSDE}) is no longer Lipschitz but quadratic growth in $z$, which leads to the derivative $f_{z}$ being unbounded.
The unboundedness of $f_{z}$ brings much trouble in obtaining the following estimate, for example, when $f$ depends only on $z$,
\begin{equation}
\label{intro-est}
\lim_{\varepsilon\rightarrow0}\mathbb{E}\left[  \left(  \int_{0}^{T}\left\vert
\int_{0}^{1}f_{z}(\bar{Z}_{t}+\lambda(Z_{t}^{\varepsilon}-\bar{Z}_{t}))d\lambda-f_{z}(\bar{Z}_{t})\right\vert ^{2}dt\right)  ^{p}\right]
=0
\end{equation}
for some $p>1$, where $\bar{Z}(\cdot)$ represents the optimal trajectory 
and $Z^{\varepsilon}(\cdot)$ represents the state trajectory after perturbation, because one can deduce (\ref{intro-est}) when $f$ is Lipschitz in $z$ by using the dominated convergence theorem.
The second one is that when the family of approximate controls produced by Ekeland's variational principle converges to the optimal one, in which appropriate space can we obtain the convergence of the solutions of the approximate variational equations to the one solving the original variational equation?
In the classical case, such an issue can be solved by applying the continuous dependence of the solutions to the Lipschitz BSDEs on the parameters, but it entails estimating the difference between the solutions from two different linear BSDEs with unbounded coefficients under our framework. Furthermore, to this end, we first need to ensure that the approximate state trajectories converge to the optimal one, which essentially involves the convergence of solutions of a family of quadratic BSDEs.
The third one is that the adjoint equation is a linear SDE with unbounded coefficients due to the unboundedness of $f_{z}$. When we deduce the SMP, the solution to it will appear as a component of the integrands of stochastic integrals with respect to the Brownian motion (see (\ref{Ito-lemma-applied})-(\ref{def-Gamma})). Such stochastic integrals are only local martingales whose mathematical expectation at the terminal time $T$ may not exist. So we must check all these stochastic integrals are true martingales on the time interval $[0, T]$ before taking the expectation.

To overcome the aforementioned difficulties, we deduce the desired convergence (\ref{intro-est}) by applying the energy-type inequality of the bounded mean oscillation (BMO) martingales together with the generalized dominated convergence theorem. Using the estimate in \cite{Briand-SL-BSDE} for the linear BSDEs with stochastic Lipschitz coefficients involving BMO martingales, the convergence of both the approximate state trajectories and the approximate variational equations are attained, and the former convergence is stronger than the latter one.
To tackle the last difficulty, we note that the solution of the adjoint equation is the Dol\'{e}ans-Dade exponential of a certain BMO martingale, which satisfies the reverse H\"{o}lder inequality ($R_{p}$) as long as $p \in [1,\bar{p})$ for some $\bar{p} >1$ (see \cite{Kazamaki}, Chapter 3, Definition 3.1). On the other hand, the most complicated term to estimate, in those integrands of stochastic integrals, is a product including the solutions of the variational equation and the adjoint equation. 
Based on this observation, we choose a proper $p \in (1,\bar{p})$ together with its conjugate $p^{\ast}$, such that the ($R_{p}$) condition holds and the solution of the variational equation admits a $p^{\ast}$-moment. Then we can apply H\"{o}lder's inequality with the couple $(p,p^{\ast})$ to the square root of the quadratic variation of that stochastic integral to verify it is a true martingale.

The rest of the paper is organized as follows. In section 2, preliminaries and the formulation of our problem are given. We use a pure backward formulation of (\ref{intro-FBSDE}) in which the terminal state $X_{T}^{u}$ is regarded as the control variable. 
Unlike the formulation in \cite{Ji-Peng-2008, Ji-Zhou-2010}, such a reformulated set of admissible controls no longer includes all square-integrable random variables but higher-order ones because of the quadratic growth in $z$. In section 3, employing the analysis of BMO martingales, 
we guarantee the well-posedness of both the variational equation and the adjoint equation. Then, applying Ekeland’s variational principle, we obtain a dynamic SMP that characterizes the optimal terminal state. 
In section 4, to illustrate the established SMP, we study its applications to a robust recursive utility maximization problem with bankruptcy prohibition.

\section{Preliminaries and problem formulation}

Let $T\in(0,+\infty)$, $n,d\in \mathbb{N}_{+}$ and $(\Omega,\mathcal{F},\mathbb{P})$ be a complete
probability space on which a standard $d$-dimensional Brownian motion
$W=(W_{t}^{1},W_{t}^{2},\ldots,W_{t}^{d})_{t\in\lbrack0,T]}^{\intercal}$ is
defined. $\mathbb{F}:\mathbb{=}\left\{  \mathcal{F}_{t},0\leq t\leq T\right\}
$ is the $\mathbb{P}$-augmentation of the natural filtration of $W$. Denote by
$\mathbb{R}^{n}$ the $n$-dimensional real Euclidean space and $\mathbb{R}%
^{n\times d}$ the set of $n\times d$ real matrices. The scalar
product (resp. norm) of any two $n\times d$ matrices $A$, $B$ is
denoted by $\left\langle A,B\right\rangle =\mathrm{tr}\{AB^{\intercal}\}$
(resp.$\vert A\vert=\sqrt{\mathrm{tr}\left\{  AA^{\intercal}\right\}  }$),
where the superscript $^{\intercal}$ denotes the transpose of vectors or matrices.

For any given $p,q\geq1$, we introduce the following spaces and notation.

\noindent$L_{\mathcal{F}_{T}}^{p}(\Omega;\mathbb{R}^{n})$: the space of $\mathcal{F}%
_{T}$-measurable, $\mathbb{R}^{n}$-valued random variables $\xi$ such that
$\left\Vert \xi \right\Vert_{L^{p}}:=\left(\mathbb{E}\left[  \left\vert \xi\right\vert ^{p}\right]\right)^{\frac{1}{p}}  <\infty$.

\noindent$L_{\mathcal{F}_{T}}^{\infty}(\Omega;\mathbb{R}^{n})$: the space of
$\mathcal{F}_{T}$-measurable, $\mathbb{R}^{n}$-valued random variables $\xi$
such that $\mathbb{P}-\underset{\omega\in\Omega}{\mathrm{ess~sup}}\left\vert \xi\left(
\omega\right)  \right\vert <\infty$.

\noindent$\mathcal{M}_{\mathcal{F}}^{p,q}([0,T];\mathbb{R}^{n})$: the space of
$\mathbb{F}$-adapted, $\mathbb{R}^{n}$-valued processes $\varphi(\cdot)$ on
$[0,T]$ such that
\[
\left\Vert \varphi(\cdot)\right\Vert _{p,q}=\left(  \mathbb{E}\left[  \left(
\int_{0}^{T}\left\vert \varphi_{t}\right\vert ^{p}dt\right)  ^{\frac{q}{p}%
}\right]  \right)  ^{\frac{1}{q}}<\infty.
\]
In particular, we denote by $\mathcal{M}_{\mathcal{F}}^{p}([0,T];\mathbb{R}%
^{n})$ the above space when $p=q$.

\noindent$L_{\mathcal{F}}^{\infty}([0,T];\mathbb{R}^{n})$: the space of $\mathbb{F}%
$-adapted, $\mathbb{R}^{n}$-valued\ processes $\varphi(\cdot)$ on $[0,T]$ such that
\[
\left\Vert \varphi(\cdot)\right\Vert _{\infty}=\lambda \otimes \mathbb{P}-\underset{(t,\omega)\in
\lbrack0,T]\times\Omega}{\mathrm{ess~sup}}\left\vert \varphi_{t}\left(
\omega\right)  \right\vert <\infty,
\]
where $\lambda$ denotes the Lebesgue measure on $[0,T]$.

\noindent$\mathcal{S}_{\mathcal{F}}^{p}([0,T];\mathbb{R}^{n})$: the space of continuous
processes $\varphi(\cdot)\in$ $\mathcal{M}_{\mathcal{F}}^{p}([0,T];\mathbb{R}%
^{n})$ such that
\[
\left\Vert \varphi \right\Vert_{\mathcal{S}^{p}}:=
\left(\mathbb{E}\left[\sup\limits_{t\in[0,T]}\left\vert \varphi_{t}\right\vert ^{p}\right]\right)^{\frac{1}{p}}
<\infty.
\]

\noindent$\mathrm{BMO}_{p}$: the space of real-valued, continuous $\mathbb{F}$-martingales $M$
with $M_{0}=0$ such that
\begin{equation}
\left\Vert M\right\Vert _{\mathrm{BMO}_{p}}:=\sup_{\tau}\left\Vert \left(
\mathbb{E}\left[  \left\vert M_{T}-M_{\tau}\right\vert ^{p}\mid\mathcal{F}%
_{\tau}\right]  \right)  ^{\frac{1}{p}}\right\Vert _{\infty}<\infty,\text{
\ }p\in\lbrack1,+\infty), \label{def-BMOp}%
\end{equation}
where the supremum is taken over all stopping times $\tau\in\mathcal{[}%
0,T\mathcal{]}$. By Corollary 2.1 in \cite{Kazamaki}, $M$ is a $\mathrm{BMO}_{p}$ martingale if and only if it is a $\mathrm{BMO}_{q}$ martingale for every $q\geq1$. Therefore, it is simple to write $\mathrm{BMO}$ to represent $\mathrm{BMO}_{p}$.

\noindent$\mathcal{E}\left(  M\right)  $: the Dol\'{e}ans-Dade exponential of a continuous
local martingale $M$, that is, $\mathcal{E}\left(  M_{t}\right)  =\exp\left\{
M_{t}-\frac{1}{2}\left\langle M\right\rangle _{t}\right\}  $ for any
$t\in\lbrack0,T]$.

\noindent$p_{M}^{\ast}$: the conjugate exponent of $p_{M}^{{}}$, i.e. $\left(
p_{M}^{{}}\right)  ^{-1}+\left(  p_{M}^{\ast}\right)  ^{-1}=1$, where $M \in \mathrm{BMO}$,
$p_{M}^{{}}$ is the positive constant defined by the following function:%
\begin{equation}
\Psi(x)=\sqrt{1+\frac{1}{x^{2}}\ln\frac{2x-1}{2(x-1)}}-1\text{\ for }%
x\in(1,+\infty) \label{Rp-cd-func}%
\end{equation}
with $\Psi(p_{M}^{{}})=\left\Vert M\right\Vert _{\mathrm{BMO}_{2}}$. By Theorem 3.1 in \cite{Kazamaki},
if $p\in\left(
1,p_{M}^{{}}\right)  $, then, for any stopping time $\tau\in\mathcal{[}%
0,T\mathcal{]}$,
\begin{equation}
\mathbb{E}[\left(  \mathbb{\mathcal{E}}\left(  M_{T}\right)  \right)
^{p}/\left(  \mathbb{\mathcal{E}}\left(  M_{\tau}\right)  \right)  ^{p}%
\mid\mathcal{F}_{\tau}]\leq C_{0} ,\text{ \ }\mathbb{P}\text{-a.s.}, \label{Rp-cd}%
\end{equation}
where $C_{0}$ is positive constant depending only on $p$ and $\mathrm{BMO}_{2}$, and
(\ref{Rp-cd}) is called the reverse H\"{o}lder inequality.

\noindent$H\cdot W$: $H$ is an $\mathbb{F}$-adapted process and $H\cdot W$ is the
stochastic integral of $H$ with respect to $W$. If $H\cdot W\in \mathrm{BMO}$, then we
write simply $p_{H}^{{}}$ for $p_{H\cdot W}^{{}}$ and $p_{H}^{\ast}$ for
$p_{H\cdot W}^{\ast}$ without ambiguity.

\subsection{Classical formulation}
Let $\bar{p}^{\ast}>1$ be a number which will be determined lately.
Consider the following forward-backward stochastic control system:
over the set of admissible controls
\[
\mathcal{U}[0,T]:=\{u(\cdot) \mid u(\cdot) \in \mathcal{M}_{\mathcal{F}}^{2,4\bar{p}^{\ast}}([0,T];\mathbb{R}^{n\times d})\},
\]
minimizing the cost functional
\begin{equation}
J(u(\cdot)):=\mathbb{E}\left[  h(X_{T}^{u},Y_{0}^{u})\right]  \label{obje-eq}%
\end{equation}
subject to the controlled FBSDE
\begin{equation}
\left\{
\begin{array}
[c]{rl}%
dX_{t}^{u}= & b(t,X_{t}^{u},u_{t})dt+\sigma(t,X_{t}^{u},u_{t})dW_{t},\\
dY_{t}^{u}= & -f(t,X_{t}^{u},Y_{t}^{u},Z_{t}^{u},u_{t})dt+\left( Z_{t}^{u}\right)^{\intercal} dW_{t},\\
X_{0}^{u}= & x_{0},\ Y_{T}^{u}=\Phi(X_{T}^{u}),
\end{array}
\right.  \label{state-eq}%
\end{equation}
and an additional convex constraint
\begin{equation}
\label{terminal-constraint}
X_{T} \in K,\ \ \mathbb{P}\text{-a.s.},
\end{equation}
where $K$ is a nonempty convex subset in $\mathbb{R}^{n}$, $x_{0} \in \mathbb{R}^{n}$,
\[%
\begin{array}
[c]{ll}%
b:[0,T]\times\mathbb{R}^{n}\times\mathbb{R}^{n\times d}\longmapsto
\mathbb{R}^{n}, & \sigma:[0,T]\times\mathbb{R}^{n}\times\mathbb{R}^{n\times
d}\longmapsto\mathbb{R}^{n\times d},\\
f:[0,T]\times\mathbb{R}^{n}\times\mathbb{R}\times\mathbb{R}^{d}\times
\mathbb{R}^{n\times d}\longmapsto\mathbb{R}, & \Phi:\mathbb{R}^{n}%
\longmapsto\mathbb{R},\\
h:\mathbb{R}^{n}\times\mathbb{R}\longmapsto\mathbb{R}, &
\end{array}
\]
are deterministic, measurable functions. We impose the following assumptions on the coefficients of (\ref{state-eq}).

\begin{assumption}
\label{assum-1}Let $L>0$ be given.

(i) $b$, $\sigma$, $f$, $h$, $\Phi$ are continuous in their arguments; $\Phi$ is
continuously differentiable; $b$, $\sigma$ are continuously differentiable in
$(x,u)$; $f$ is continuously differentiable in $(x,y,z,u)$; $h$ is continuously differentiable in $(x,y)$.

(ii) $\Phi$, $\Phi_{x}$, $b_{x}$, $\sigma_{x}$, $b_{u}$, $\sigma_{u}$, $f_{y}$
are bounded; $h_{x}$, $h_{y}$ are bounded by $L(1+\left\vert x\right\vert
+\left\vert y\right\vert )$.

(iii)%
\[%
\begin{array}
[c]{ll}%
\left\vert f(t,x,0,0,u)\right\vert \leq L, & \left\vert f_{x}%
(t,x,y,z,u)\right\vert \leq L\left(  1+\left\vert y\right\vert +\left\vert
z\right\vert ^{2}\right)  ,\\
\left\vert f_{z}(t,x,y,z,u)\right\vert \leq L\left(  1+\left\vert z\right\vert
\right)  , & \left\vert f_{u}(t,x,y,z,u)\right\vert \leq L\left(  1+\left\vert
y\right\vert +\left\vert z\right\vert \right)  .
\end{array}
\]
\end{assumption}

Let $A=e^{6L\left\Vert \Phi\right\Vert _{\infty}+LT}\left[  \left\Vert
\Phi\right\Vert _{\infty}+LT+2\left(  L^{-2}+T\right)  \right]$
and $\bar{p}$ be the constant such that
\begin{equation}
\Psi(\bar{p})=\sqrt{3L^{2}\left(  T+2A\right)},
\label{p-bar}
\end{equation}
where the function $\Psi(\cdot)$ is defined by (\ref{Rp-cd-func}).
We assign the value $\bar{p}(\bar{p}-1)^{-1}$ to $\bar{p}^{\ast}$. Obviously, $\bar{p}^{\ast}$ is the conjugate of $\bar{p}$.

\subsection{Backward formulation}
In this subsection, we give an equivalent backward formulation of the above stochastic optimal control problem
(\ref{obje-eq})-(\ref{state-eq}). To do so we need an additional assumption:
\begin{assumption}
\label{assum-2}
There exists $\alpha>0$ such that
\[
\left \vert \sigma(t,x,u_{1})-\sigma(t,x,u_{2}) \right \vert \geq \alpha \left \vert u_{1}-u_{2} \right \vert
\]
for all $x\in \mathbb{R}^{n}$, $t\in [0,T]$ and $u_{1},u_{2} \in \mathbb{R}^{n \times d}$.
\end{assumption}
Note that Assumptions \ref{assum-1} and \ref{assum-2} imply the mapping $u\longmapsto \sigma(t,x,u)$ is a bijection from
$\mathbb{R}^{n \times d}$ onto itself for any $(t,x)$. Therefore, let $\theta=\sigma(t,x,u)$ and denote the inverse function by
$u=\tilde{\sigma}(t,x,\theta)$. Then system (\ref{state-eq}) can be rewritten as
\[
\left\{
\begin{array}
[c]{rl}%
dX_{t}= & -l(t,X_{t},\theta_{t})dt+\theta_{t}dW_{t},\\
dY_{t}= & -g(t,X_{t},Y_{t},Z_{t},\theta_{t})dt+\left( Z_{t} \right)^{\intercal} dW_{t},\\
X_{0}= & x_{0},\ Y_{T}=\Phi(X_{T}),\text{ \ }t\in\lbrack0,T],
\end{array}
\right.
\]
where $l(t,x,\theta)=-b(t,x,\tilde{\sigma}(t,x,\theta))$ and $g(t,x,y,z,\theta)=f(t,x,y,z,\tilde{\sigma}(t,x,\theta))$. Since $u\longmapsto \sigma(t,x,u)$ is a bijection, 
we may regard $\theta(\cdot)$ as the control variable. Due to the well-posedness of the BSDEs with Lipschitz generators, selecting $\theta(\cdot)$ is equivalent to selecting the terminal state $X_{T}$. Then we obtain the following purely backward control system:
\begin{equation}
\left\{
\begin{array}
[c]{rl}%
dX_{t}^{\xi}= & -l(t,X_{t}^{\xi},\theta_{t}^{\xi})dt+\theta_{t}^{\xi}dW_{t},\\
X_{T}^{u}= & \xi,\\
dY_{t}^{\xi}= & -g(t,X_{t}^{\xi},Y_{t}^{\xi},Z_{t}^{\xi},\theta_{t}^{\xi
})dt+\left( Z_{t}^{\xi} \right)^{\intercal} dW_{t},\\
Y_{T}^{u}= & \Phi(\xi),\text{ \ }t\in\lbrack0,T],
\end{array}
\right.  \label{state-eq-new}%
\end{equation}
where $\xi$ is the control variable to be chosen from
\[
\mathcal{U}_{ad}=\{\xi\in L_{\mathcal{F}_{T}}^{4\bar{p}^{\ast}}%
(\Omega;\mathbb{R}^{n}):\xi(\omega)\in K,\text{ }\mathbb{P}\text{-a.s. }\omega
\in\Omega\}.
\]
The equivalent cost functional is
\begin{equation}
J(\xi):=\mathbb{E}\left[  h(\xi,Y_{0}^{\xi})\right]  .
\label{cost-functional}%
\end{equation}
Thus, the original problem is equivalent to minimizing $J(\xi)$ over $\mathcal{U}_{ad}$, subject to the controlled system (\ref{state-eq-new}) and the initial constraint $X_{0}^{\xi}=x_{0}$.

\begin{remark}
According to the definitions of $l$, $g$ and Assumption \ref{assum-1}, one can verify that $l$ and $g$ satisfy similar conditions in Assumption \ref{assum-1}.
\end{remark}

From the existence result (Proposition 3) in \cite{HuBSDEquad08} and the uniqueness result (Lemma 2.1) in \cite{Hu-Tang2016}, we have:

\begin{theorem}
\label{state-eq-exist-th} Let Assumptions \ref{assum-1} and \ref{assum-2}
hold. Then, for any $\xi\in\mathcal{U}_{ad}$,
(\ref{state-eq}) admits a unique solution $(X^{\xi}\left(  \cdot\right)
,Y^{\xi}\left(  \cdot\right)  ,Z^{\xi}\left(  \cdot\right)  ,\theta^{\xi
}(\cdot))\in\mathcal{S}_{\mathcal{F}}^{4\bar{p}^{\ast}}([0,T];\mathbb{R}%
^{n})\times L_{\mathcal{F}}^{\infty}([0,T];\mathbb{R})\times\mathcal{M}%
_{\mathcal{F}}^{2}([0,T];\mathbb{R}^{d})\times$ $\mathcal{M}_{\mathcal{F}%
}^{2,4\bar{p}^{\ast}}([0,T];\mathbb{R}^{n\times d})$ such that $Z_{{}}^{\xi
}\cdot W\in\mathrm{BMO}$. Furthermore, we have the following estimate:%
\begin{equation}
\left\{
\begin{array}
[c]{l}%
\left\Vert X^{\xi}\right\Vert _{\mathcal{S}^{4\bar{p}^{\ast}}}^{4\bar{p}%
^{\ast}}+\left\Vert \theta^{\xi}\right\Vert _{2,4\bar{p}^{\ast}}^{4\bar
{p}^{\ast}}\leq C\mathbb{E}\left[  \left\vert \xi\right\vert ^{4\bar{p}^{\ast
}}+\left(  \int_{0}^{T}\left\vert l(t,0,0)\right\vert dt\right)  ^{4\bar
{p}^{\ast}}\right]  ,\\
\left\Vert Y^{\xi}\right\Vert _{\infty}+\left\Vert Z^{\xi}\cdot W\right\Vert
_{\mathrm{BMO}_{2}}^{2}<A,
\end{array}
\right.  \label{est-quad-qfbsde}%
\end{equation}
where $C$ depends on $T$, $\bar{p}^{\ast}$, $\left\Vert l_{x}\right\Vert
_{\infty}$, $\left\Vert l_{u}\right\Vert _{\infty}$.
\end{theorem}

\begin{corollary}
\label{Zu-bounded} From the energy-type inequality (\cite{Kazamaki}, page 26) and
the second inequality in (\ref{est-quad-qfbsde}), applying H\"{o}lder's inequality yields
\[
\sup_{\xi\in\mathcal{U}_{ad}}\mathbb{E}\left[  \left(
{\displaystyle\int_{0}^{T}}
\left\vert Z_{t}^{\xi}\right\vert ^{2}dt\right)  ^{p}\right]  <\left(  \left[
p\right]  +1\right)  !A^{2p},\ \ \forall p>0.
\]
\end{corollary}

\section{Stochastic maximum principle}
Applying Ekeland’s variational principle,
we derive the stochastic maximum principle for the optimization
problem (\ref{state-eq-new})-(\ref{cost-functional}) in this section.
The proposition below, which will be used frequently, follows from Corollary 9 and Theorem 10 in \cite{Briand-SL-BSDE}.

\begin{proposition}
\label{est-linear-prop} Assume $\lambda(\cdot) \in L_{\mathcal{F}}^{\infty}([0,T];\mathbb{R})$,
$\mu\cdot W\in \mathrm{BMO}$ and $\left(  \xi,\varphi(\cdot)\right)  \in
L_{\mathcal{F}_{T}}^{\beta_{0}}(\Omega;\mathbb{R})\times\mathcal{M}%
_{\mathcal{F}}^{1,\beta_{0}}([0,T];\mathbb{R})$ for some $\beta_{0}>p_{\mu
}^{\ast}$. Then there exists a unique solution $(Y(\cdot),Z(\cdot))\in\bigcap
_{1<\beta<\beta_{0}}\left(  \mathcal{S}_{\mathcal{F}}^{\beta}([0,T];\mathbb{R}%
)\times\mathcal{M}_{\mathcal{F}}^{2,\beta}([0,T];\mathbb{R}^{d})\right)  $ to
the following BSDE
\[
Y_{t}=\xi+\int_{t}^{T}\left(  \lambda_{s}Y_{s}+\mu_{s}^{\intercal}%
Z_{s}+\varphi_{s}\right)  ds-\int_{t}^{T}Z_{s}^{\intercal}dW_{s},\text{  }t \in [0,T].
\]
Moreover, for any $\beta\in\left(  1,\beta_{0}\right)  $, we have
\begin{equation}
\mathbb{E}\left[  \sup\limits_{t\in\lbrack0,T]}\left\vert Y_{t}\right\vert
^{\beta}+\left(  \int_{0}^{T}\left\vert Z_{t}\right\vert ^{2}dt\right)
^{\frac{\beta}{2}}\right]  \leq C\left(  \mathbb{E}\left[  \left\vert
\xi\right\vert ^{\beta_{0}}+\left(  \int_{0}^{T}\left\vert \varphi
_{t}\right\vert dt\right)  ^{\beta_{0}}\right]  \right)  ^{\frac{\beta}%
{\beta_{0}}}, \label{est-linear-data}%
\end{equation}
where $C>0$ depends on\ $\beta,$ $\beta_{0}$, $T$, $\left \Vert \lambda \right \Vert_{\infty}$, $\left\Vert
\mu\cdot W\right\Vert _{\mathrm{BMO}_{2}}$, and increases with respect to $\left\Vert
\mu\cdot W\right\Vert _{\mathrm{BMO}_{2}}$.
\end{proposition}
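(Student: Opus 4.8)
The plan is to recognize this linear BSDE as a special instance of the stochastic Lipschitz framework of Briand and Confortola \cite{Briand-SL-BSDE}, and then to quote their existence/uniqueness theorem together with their a priori estimate, the only real work being to check that our hypotheses match theirs. First I would introduce the generator $g(s,y,z):=\lambda_{s}y+\mu_{s}^{\intercal}z+\varphi_{s}$ and verify the stochastic Lipschitz condition required in \cite{Briand-SL-BSDE}: for all $(y,z),(y^{\prime},z^{\prime})$,
\[
\left\vert g(s,y,z)-g(s,y^{\prime},z^{\prime})\right\vert \leq r_{s}\left\vert y-y^{\prime}\right\vert +\theta_{s}\left\vert z-z^{\prime}\right\vert ,
\]
with the Lipschitz processes $r_{s}:=\Vert\lambda\Vert_{\infty}$ and $\theta_{s}:=\left\vert \mu_{s}\right\vert$. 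Since $\Vert\lambda\Vert_{\infty}<\infty$ we have $\int_{0}^{T}r_{s}\,ds\leq\Vert\lambda\Vert_{\infty}T<\infty$, and since $\mu\cdot W\in BMO$ the process $\theta$ generates a BMO-martingale; this is precisely the structural assumption under which their results are proved, and the reverse-Hölder exponent attached to $\theta\cdot W$ is the one encoded by $\Psi$ in (\ref{Rp-cd-func}), namely $p_{\mu}$, with conjugate $p_{\mu}^{\ast}$.

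Next I would check the integrability of the data. The terminal value satisfies $\xi\in L_{\mathcal{F}_{T}}^{\beta_{0}}(\Omega;\mathbb{R})$ by hypothesis, and the free term $g(s,0,0)=\varphi_{s}$ satisfies $\mathbb{E}[(\int_{0}^{T}\left\vert \varphi_{s}\right\vert ds)^{\beta_{0}}]<\infty$ because $\varphi\in\mathcal{M}_{\mathcal{F}}^{1,\beta_{0}}([0,T];\mathbb{R})$. Together with the standing assumption $\beta_{0}>p_{\mu}^{\ast}$, these are exactly the data conditions demanded by Theorem 10 in \cite{Briand-SL-BSDE}. Applying that theorem yields a unique solution $(Y,Z)$ in $\bigcap_{1<\beta<\beta_{0}}\left(\mathcal{S}_{\mathcal{F}}^{\beta}([0,T];\mathbb{R})\times\mathcal{M}_{\mathcal{F}}^{2,\beta}([0,T];\mathbb{R}^{d})\right)$, while Corollary 9 in \cite{Briand-SL-BSDE} furnishes the a priori bound (\ref{est-linear-data}), the constant depending only on $\beta$, $\beta_{0}$ and $\Vert\mu\cdot W\Vert_{BMO_{2}}$ and being increasing in the last quantity. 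Linearity of $g$ introduces no extra growth, so no further verification beyond the two displays above is needed.

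The step requiring the most care is the exponent condition $\beta_{0}>p_{\mu}^{\ast}$: the estimates in \cite{Briand-SL-BSDE} rest on the reverse Hölder inequality for the BMO-martingale $\mu\cdot W$, whose admissible integrability exponent is controlled through $\Psi$ by $p_{\mu}$, and one must make sure that the strict gap between the running exponent $\beta$ and the data exponent $\beta_{0}$ leaves enough room for the associated Hölder and energy-type estimates to close. As an independent cross-check of both the solvability and the form of the constant, I would note that for a linear generator the solution admits the explicit representation $Y_{t}=\mathbb{E}[\,\Gamma_{t}^{-1}\Gamma_{T}\xi+\int_{t}^{T}\Gamma_{t}^{-1}\Gamma_{s}\varphi_{s}\,ds\mid\mathcal{F}_{t}]$, where the resolvent $\Gamma_{s}=\exp\{\int_{0}^{s}\lambda_{r}\,dr\}\,\mathcal{E}\left((\mu\cdot W)_{s}\right)$ solves $d\Gamma_{s}=\Gamma_{s}(\lambda_{s}\,ds+\mu_{s}^{\intercal}\,dW_{s})$; the $L^{q}$-integrability of the ratios $\Gamma_{t}^{-1}\Gamma_{s}$ is exactly what the reverse Hölder property of $\mathcal{E}(\mu\cdot W)$ provides, confirming that the condition $\beta_{0}>p_{\mu}^{\ast}$ is the natural threshold and that the estimate degrades monotonically in $\Vert\mu\cdot W\Vert_{BMO_{2}}$. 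Once this is in place, the remainder is a transcription of the statements of \cite{Briand-SL-BSDE} into the present notation.
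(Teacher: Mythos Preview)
Your proposal is correct and matches the paper's own treatment: the paper states this proposition as ``a direct application of the Corollary 9 and Theorem 10 in \cite{Briand-SL-BSDE}'' without further proof, and you have carried out exactly that application, verifying the stochastic Lipschitz hypotheses and the integrability/exponent conditions needed to invoke those results. Your additional cross-check via the explicit resolvent representation is not in the paper but is a harmless (and illuminating) supplement.
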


\subsection{Variational equation}
In this subsection, the constant $C$ will change from line to line in our proof.

For $\xi_{1}$, $\xi_{2} \in \mathcal{U}_{ad}$, define a metric in $\mathcal{U}_{ad}$ by
\[
d(\xi_{1},\xi_{2}):=\left(\mathbb{E}\left[ \left\vert \xi_{1}-\xi_{2} \right\vert^{4\bar{p}^{\ast}} \right] \right)^{\frac{1}{4\bar{p}^{\ast}}}.
\]
One can verify that $\left( \mathcal{U}_{ad},d(\cdot,\cdot) \right)$ is a complete metric space.
In fact, if $\left\{ \xi_{m} \right\}_{m=1}^{\infty}$ is a Cauchy sequence in $\left( \mathcal{U}_{ad},d(\cdot,\cdot) \right)$,
then we can find a subsequence $\left\{ \xi_{m_{k}} \right\}_{k=1}^{\infty}$ such that $d(\xi_{m_{k+1}},\xi_{m_{k}})<\frac{1}{2^{k}}$.
Set $A_{0}:=\Omega$,
\[
A_{i}:=\bigcup_{k=i}^{\infty}\left\{ \omega \in \Omega: \left\vert \xi_{m_{k+1}}(\omega)-\xi_{m_{k}}(\omega) \right\vert > 0 \right\}, \ \ i=1,2,\ldots.
\]
Choose $\xi:=\sum_{i=1}^{\infty}\xi_{m_{i}}\mathrm{1}_{A_{i-1}\setminus A_{i}}$, where $\mathrm{1}_{A}$ denotes the indicator of any $A \in \mathcal{F}$.
Then we can deduce $\lim_{k\rightarrow \infty} d(\xi_{m_{k}},\xi)=0$ and $\xi \in \mathcal{U}_{ad}$. Since $\left\{ \xi_{m} \right\}_{m=1}^{\infty}$ is a Cauchy sequence in $\left( \mathcal{U}_{ad},d(\cdot,\cdot) \right)$, we also have $\lim_{m\rightarrow \infty} d(\xi_{m},\xi)=0$ from which the completeness of $\left( \mathcal{U}_{ad},d(\cdot,\cdot) \right)$ follows.

Let $\bar{\xi}$ be optimal and $(\bar{X}(\cdot),\bar{Y}%
(\cdot),\bar{Z}(\cdot),\bar{\theta}(\cdot))$ be the corresponding state trajectory to (\ref{state-eq-new}).
For $i=1,2,\ldots
,n$, set
\[
l(\cdot)=\left(  l^{1}(\cdot),l^{2}(\cdot),\ldots,l_{{}}^{n}(\cdot)\right)
^{\intercal}\in\mathbb{R}^{n},
\]%
\[%
\begin{array}
[c]{cc}%
l_{x}(\cdot)=\left(
\begin{array}
[c]{ccc}%
l_{x_{1}}^{1}(\cdot) & \cdots & l_{x_{n}}^{1}(\cdot)\\
\vdots & \ddots & \vdots\\
l_{x_{1}}^{n}(\cdot) & \cdots & l_{x_{n}}^{n}(\cdot)
\end{array}
\right)  , & l_{\theta}^{i}(\cdot)=\left(
\begin{array}
[c]{ccc}%
l_{\theta_{11}}^{i}(\cdot) & \cdots & l_{\theta_{1d}}^{i}(\cdot)\\
\vdots & \ddots & \vdots\\
l_{\theta_{n1}}^{i}(\cdot) & \cdots & l_{\theta_{nd}}^{i}(\cdot)
\end{array}
\right)  .
\end{array}
\]
For simplicity, denote
\begin{equation}%
\begin{array}
[c]{lll}%
l_{x}(t)=l_{x}(t,\bar{X}_{t},\bar{\theta}_{t}), & l_{\theta}(t)=l_{\theta
}(t,\bar{X}_{t},\bar{\theta}_{t}), & g_{w}(t)=g_{w}(t,\bar{X}_{t},\bar{Y}%
_{t},\bar{Z}_{t},\bar{\theta}_{t}),
\end{array}
\label{notation-coefficient-optimal}%
\end{equation}
where $w=x$, $y$, $z$, $\theta$.

Using the convexity of $U$ and
taking an arbitrary $\xi \in\mathcal{U}_{ad}$, we know,
for $\varepsilon\in [0,1]$,
\[
\xi^{\varepsilon}:=\bar{\xi}+\varepsilon(\xi-\bar{\xi}) \in \mathcal{U}_{ad}.
\]
Let $(X_{{}}^{\varepsilon}(\cdot),Y_{{}}^{\varepsilon
}(\cdot),Z_{{}}^{\varepsilon}(\cdot),\theta^{\varepsilon}(\cdot))$ be the state trajectory of
(\ref{state-eq-new}) associated with $\xi^{\varepsilon}$. To derive the first-order necessary condition in terms of small $\varepsilon$, we
consider the following two BSDEs:
\begin{equation}
\left\{
\begin{array}
[c]{rl}%
d\hat{X}_{t}= & -\left[  l_{x}(t)\hat{X}_{t}+l_{\theta}(t)\hat{\theta}%
_{t}\right]  dt+\hat{\theta}_{t}dW_{t},\text{ \ }t\in\lbrack0,T],\\
\hat{X}_{T}= & \xi-\bar{\xi}%
\end{array}
\right.  \label{new-form-x1}%
\end{equation}
and
\begin{equation}
\left\{
\begin{array}
[c]{ll}%
d\hat{Y}_{t}= & -\left[  \left(  g_{x}(t)\right)  ^{\intercal}\hat{X}%
_{t}+g_{y}(t)\hat{Y}_{t}+\left(  g_{z}(t)\right)  ^{\intercal}\hat{Z}%
_{t}+\left\langle g_{\theta}(t),\hat{\theta}_{t}\right\rangle \right]  dt\\
& +\hat{Z}_{t}^{\intercal}dW_{t},\ \ t\in\lbrack0,T],\\
\hat{Y}_{T}= & \left(  \Phi_{x}(\bar{\xi})\right)  ^{\intercal}(\xi-\bar{\xi
}),
\end{array}
\right.  \label{new-form-y1}%
\end{equation}
where $l_{\theta}(t)\hat{\theta}_{t}:=\left( \langle l_{\theta}^{1}(t),\hat{\theta}_{t}\rangle,\ldots, \langle l_{\theta}^{n}(t),\hat{\theta}_{t}\rangle \right)^{\intercal}$, $t \in [0,T]$.

Under Assumptions \ref{assum-1} and \ref{assum-2}, note that (\ref{new-form-x1}) is a linear BSDE with bounded coefficients.
According to the existence and uniqueness result of solution of BSDE (\cite{Karoui-Peng-Quenez-1997}, Theorem 5.1), we obtain:

\begin{lemma}
\label{well-posedness-hat-X-theta}
Let Assumptions \ref{assum-1} and \ref{assum-2} hold. Then (\ref{new-form-x1}) admits a unique solution $\left(\hat{X}(\cdot),\hat{\theta}(\cdot)\right)\in$\\
$\mathcal{S}_{\mathcal{F}}^{4\bar{p}^{\ast}}([0,T];\mathbb{R}^{n})\times \mathcal{M}_{\mathcal{F}}^{2,4\bar{p}^{\ast}}([0,T];\mathbb{R}^{n\times d})$.
\end{lemma}

From the a priori estimate for BSDEs (\cite{Karoui-Peng-Quenez-1997}, Proposition 5.1), we can obtain the following estimates by using the method in \cite{Ji-Peng-2008} similarly.

\begin{lemma}
\label{est-forward-expansion-lem}
Let Assumptions \ref{assum-1} and \ref{assum-2} hold. Then, for any $\beta\in\left(  1,4\bar{p}^{\ast}\right]$, we have
\begin{equation}
\mathbb{E}\left[  \sup\limits_{t\in\lbrack0,T]}\left\vert X_{t}^{\varepsilon
}-\bar{X}_{t}\right\vert ^{\beta}+\left(  \int_{0}^{T}\left\vert \theta
_{t}^{\varepsilon}-\bar{\theta}_{t}\right\vert ^{2}dt\right)  ^{\frac{\beta
}{2}}\right]  =O\left(  \varepsilon^{\beta}\right)
,\label{est-forward-expansion-1}%
\end{equation}%
\begin{equation}
\mathbb{E}\left[  \sup\limits_{t\in\lbrack0,T]}\left\vert X_{t}^{\varepsilon
}-\bar{X}_{t}-\varepsilon\hat{X}_{t}\right\vert ^{\beta}+\left(  \int_{0}%
^{T}\left\vert \theta_{t}^{\varepsilon}-\bar{\theta}_{t}-\varepsilon
\hat{\theta}_{t}\right\vert ^{2}dt\right)  ^{\frac{\beta}{2}}\right]
=o\left(  \varepsilon^{\beta}\right)  .\label{est-forward-expansion-2}%
\end{equation}
\end{lemma}

For $t \in [0,T]$, we set
\[
\begin{array}
[c]{l}%
\left(  \chi_{t}^{1,\varepsilon},\eta_{t}^{1,\varepsilon},\zeta_{t}%
^{1,\varepsilon},\Theta_{t}^{1,\varepsilon}\right)  :=\left(  X_{t}%
^{\varepsilon}-\bar{X}_{t},Y_{t}^{\varepsilon}-\bar{Y}_{t},Z_{t}^{\varepsilon
}-\bar{Z}_{t},\theta_{t}^{\varepsilon}-\bar{\theta}_{t}\right)  ,\\
\text{ \ \ \ \ \ }\Lambda_{t}:=(\bar{X}_{t},\bar{Y}_{t},\bar{Z}_{t}%
,\bar{\theta}_{t}),\text{ \ \ \ }\Lambda_{t}^{\varepsilon}:=(X_{t}%
^{\varepsilon},Y_{t}^{\varepsilon},Z_{t}^{\varepsilon},\theta_{t}%
^{\varepsilon}).
\end{array}
\]
Then we have%
\begin{equation}
\left\{
\begin{array}
[c]{rl}%
d\eta_{t}^{1,\varepsilon}= & -\left[  \left(  \tilde{g}_{x}^{\varepsilon
}(t)\right)  ^{\intercal}\chi_{t}^{1,\varepsilon}+\tilde{g}_{y}^{\varepsilon
}(t)\eta_{t}^{1,\varepsilon}+\left(  \tilde{g}_{z}^{\varepsilon}(t)\right)
^{\intercal}\zeta_{t}^{1,\varepsilon}+\left\langle \tilde{g}_{\theta
}^{\varepsilon}(t),\Theta_{t}^{1,\varepsilon}\right\rangle \right]  dt\\
& +\left(  \zeta_{t}^{1,\varepsilon}\right)  ^{\intercal}dW_{t},\ \ t\in
\lbrack0,T],\\
\eta_{T}^{1,\varepsilon}= & \Phi(\xi^{\varepsilon})-\Phi(\bar{\xi}),
\end{array}
\right.  \label{ep-bar-y}%
\end{equation}
where $\tilde{g}_{z}^{\varepsilon}(t)=\int_{0}^{1}g_{z}(t,\Lambda_{t}%
+\nu(\Lambda_{t}^{\varepsilon}-\Lambda_{t}))d\nu$; $\tilde{g}%
_{x}^{\varepsilon}(t)$, $\tilde{g}_{y}^{\varepsilon}(t)$, $\tilde{g}%
_{\theta}^{\varepsilon}(t)$ are defined similarly.

\begin{remark}
\label{remark-C-uniform} Due to the second inequality in
(\ref{est-quad-qfbsde}) and $\left\vert \tilde{g}_{z}^{\varepsilon
}(t)\right\vert \leq L\left(1+  \left\vert \bar{Z}_{t}\right\vert
+\left\vert Z_{t}^{\varepsilon}\right\vert \right)  $, it can be verified that
$\sup_{\varepsilon\in\lbrack0,1]}\left\Vert \tilde{g}_{z}^{\varepsilon}\cdot
W\right\Vert _{\mathrm{BMO}_{2}}^{2}<3L^{2}(T+2A)$. Then, recalling (\ref{Rp-cd-func}), by definition of $p_{\tilde{g}_{z}^{\varepsilon}}$ and $\bar{p}$, we have
$\sup_{\varepsilon\in\lbrack0,1]}p_{\tilde{g}_{z}^{\varepsilon}}^{\ast}%
<\bar{p}^{\ast}$.
\end{remark}

The result below follows from Proposition \ref{est-linear-prop}, which provides the estimate for $\left(  \eta^{1,\varepsilon}%
(\cdot),\zeta^{1,\varepsilon}(\cdot)\right)  $.

\begin{lemma}
\label{est-epsilon-bar} Let Assumptions \ref{assum-1} and \ref{assum-2} hold. Then, for any $\beta\in\left(  1,2\bar{p}^{\ast
}\right)$, we have
\begin{equation}
\mathbb{E}\left[  \sup\limits_{t\in\lbrack0,T]}\left\vert \eta_{t}^{1,\varepsilon
}\right\vert ^{\beta}+\left(  \int_{0}^{T}\left\vert \zeta_{t}^{1,\varepsilon
}\right\vert ^{2}dt\right)  ^{\frac{\beta}{2}}\right]  =O\left(
\varepsilon^{\beta}\right).
\label{est-ep-yz}%
\end{equation}
\end{lemma}

\begin{proof}
Under Assumptions \ref{assum-1} and \ref{assum-2}, we get
\[%
\begin{array}
[c]{l}%
\left\vert \left(  \tilde{g}_{x}^{\varepsilon}(t)\right)  ^{\intercal}\chi
_{t}^{1,\varepsilon}\right\vert \leq C\left(  1+\left\vert Y_{t}^{\varepsilon
}\right\vert +\left\vert \bar{Y}_{t}\right\vert +\left\vert Z_{t}%
^{\varepsilon}\right\vert ^{2}+\left\vert \bar{Z}_{t}\right\vert ^{2}\right)
\left\vert \chi_{t}^{1,\varepsilon}\right\vert ,\\
\left\vert \langle\tilde{g}_{\theta}^{\varepsilon}(t),\Theta_{t}%
^{1,\varepsilon}\rangle\right\vert \leq C\left(  1+\left\vert Y_{t}%
^{\varepsilon}\right\vert +\left\vert \bar{Y}_{t}\right\vert +\left\vert
Z_{t}^{\varepsilon}\right\vert +\left\vert \bar{Z}_{t}\right\vert \right)
\left\vert \Theta_{t}^{1,\varepsilon}\right\vert .
\end{array}
\]
Due to (\ref{est-quad-qfbsde}), $Y^{\varepsilon}(\cdot)$, $\bar{Y}(\cdot)$ are both bounded by $A$.
Therefore, for any $\beta\in\left(  1,2\bar{p}^{\ast
}\right)  $, by Proposition \ref{est-linear-prop} and Remark
\ref{remark-C-uniform}, we obtain
\begin{equation}
\label{est-eta-zeta}
\begin{array}
[c]{l}%
\mathbb{E}\left[  \sup\limits_{t\in\lbrack0,T]}\left\vert \eta_{t}%
^{1,\varepsilon}\right\vert ^{\beta}+\left(  \int_{0}^{T}\left\vert \zeta
_{t}^{1,\varepsilon}\right\vert ^{2}dt\right)  ^{\frac{\beta}{2}}\right]  \\
\leq C\left(  \mathbb{E}\left[  \left\vert \Phi(\xi^{\varepsilon})-\Phi
(\bar{\xi})\right\vert ^{2\bar{p}^{\ast}}+\left(  \int_{0}^{T}\left\vert
\left(  \tilde{g}_{x}^{\varepsilon}(t)\right)  ^{\intercal}\chi_{t}%
^{1,\varepsilon}+\left\langle \tilde{g}_{\theta}^{\varepsilon}(t),\Theta
_{t}^{1,\varepsilon}\right\rangle \right\vert dt\right)  ^{2\bar{p}^{\ast}%
}\right]  \right)  ^{\frac{\beta}{2\bar{p}^{\ast}}}\\
\leq C\left\{  \left(  \mathbb{E}\left[  \left\vert (\xi-\bar{\xi})\right\vert
^{2\bar{p}^{\ast}}\right]  \right)  ^{\frac{\beta}{2\bar{p}^{\ast}}%
}\varepsilon^{\beta}+\left(  \mathbb{E}\left[  \left(  \int_{0}^{T}\left(
1+\left\vert Z_{t}^{\varepsilon}\right\vert ^{2}+\left\vert \bar{Z}%
_{t}\right\vert ^{2}\right)  \left\vert \chi_{t}^{1,\varepsilon}\right\vert
dt\right)  ^{2\bar{p}^{\ast}}\right]  \right)  ^{\frac{\beta}{2\bar{p}^{\ast}%
}}\right.  \\
\ \ +\left.  \left(  \mathbb{E}\left[  \left(  \int_{0}^{T}\left(
1+\left\vert Z_{t}^{\varepsilon}\right\vert +\left\vert \bar{Z}_{t}\right\vert
\right)  \left\vert \Theta_{t}^{1,\varepsilon}\right\vert dt\right)
^{2\bar{p}^{\ast}}\right]  \right)  ^{\frac{\beta}{2\bar{p}^{\ast}}}\right\},
\end{array}
\end{equation}
where the constant $C$ is independent of $\varepsilon$.
For the second term in the last inequality of (\ref{est-eta-zeta}),
by (\ref{est-forward-expansion-1}) and Corollary \ref{Zu-bounded},
it follows from H\"{o}lder's inequality that
\[
\begin{array}
[c]{rl}
& \mathbb{E}\left[  \left(  \int_{0}^{T}\left(  1+\left\vert Z_{t}%
^{\varepsilon}\right\vert ^{2}+\left\vert \bar{Z}_{t}\right\vert ^{2}\right)
\left\vert \chi_{t}^{1,\varepsilon}\right\vert dt\right)  ^{2\bar{p}^{\ast}%
}\right]  \\
\leq & \mathbb{E}\left[  \sup\limits_{t\in\lbrack0,T]}\left\vert \chi
_{t}^{1,\varepsilon}\right\vert ^{2\bar{p}^{\ast}}\left(  \int_{0}^{T}\left(
1+\left\vert Z_{t}^{\varepsilon}\right\vert ^{2}+\left\vert \bar{Z}%
_{t}\right\vert ^{2}\right)  dt\right)  ^{2\bar{p}^{\ast}}\right]  \\
\leq & \left(  \mathbb{E}\left[  \sup\limits_{t\in\lbrack0,T]}\left\vert
\chi_{t}^{1,\varepsilon}\right\vert ^{4\bar{p}^{\ast}}\right]  \right)
^{\frac{1}{2}}\left(  \mathbb{E}\left[  \left(  \int_{0}^{T}\left(
1+\left\vert Z_{t}^{\varepsilon}\right\vert ^{2}+\left\vert \bar{Z}%
_{t}\right\vert ^{2}\right)  dt\right)  ^{4\bar{p}^{\ast}}\right]  \right)
^{\frac{1}{2}}\\
\leq & C\varepsilon^{2\bar{p}^{\ast}};
\end{array}
\]
similarly, for the third term in the last inequality of (\ref{est-eta-zeta}), we get
\[%
\begin{array}
[c]{rl}
& \mathbb{E}\left[  \left(  \int_{0}^{T}\left(  1+\left\vert Z_{t}%
^{\varepsilon}\right\vert +\left\vert \bar{Z}_{t}\right\vert \right)
\left\vert \Theta_{t}^{1,\varepsilon}\right\vert dt\right)  ^{2\bar{p}^{\ast}%
}\right]  \\
\leq & 3^{\bar{p}^{\ast}}\mathbb{E}\left[  \left(  \int_{0}^{T}\left(  1+\left\vert
Z_{t}^{\varepsilon}\right\vert ^{2}+\left\vert \bar{Z}_{t}\right\vert
^{2}\right)  dt\right)  ^{\bar{p}^{\ast}}\left(  \int_{0}^{T}\left\vert
\Theta_{t}^{1,\varepsilon}\right\vert ^{2}dt\right)  ^{\bar{p}^{\ast}}\right]
\\
\leq & 3^{\bar{p}^{\ast}}\left(  \mathbb{E}\left[  \left(  \int_{0}^{T}\left(  1+\left\vert
Z_{t}^{\varepsilon}\right\vert ^{2}+\left\vert \bar{Z}_{t}\right\vert
^{2}\right)  dt\right)  ^{2\bar{p}^{\ast}}\right]  \right)  ^{\frac{1}{2}%
}\left(  \mathbb{E}\left[  \left(  \int_{0}^{T}\left\vert \Theta
_{t}^{1,\varepsilon}\right\vert ^{2}dt\right)  ^{2\bar{p}^{\ast}}\right]
\right)  ^{\frac{1}{2}}\\
\leq & C\varepsilon^{2\bar{p}^{\ast}}.
\end{array}
\]
Consequently, we have
\[
\mathbb{E}\left[  \sup\limits_{t\in\lbrack0,T]}\left\vert \eta_{t}^{1,\varepsilon
}\right\vert ^{\beta}+\left(  \int_{0}^{T}\left\vert \zeta_{t}^{1,\varepsilon
}\right\vert ^{2}dt\right)  ^{\frac{\beta}{2}}\right] \leq C\varepsilon^{\beta},
\]
where $C$ is independent of $\varepsilon$. The proof is complete.
\end{proof}

\begin{corollary}
\label{cor-1}
For any $p \in \left[ 1, \bar{p}^{\ast} \right) $, (\ref{est-ep-yz}) implies that
\[
\lim_{\varepsilon\rightarrow0^{+}}\mathbb{E}\left[  \left\vert \left(
\int_{0}^{T}\left\vert Z_{t}^{\varepsilon}\right\vert ^{2}dt\right)
^{p}-\left(  \int_{0}^{T}\left\vert \bar{Z}_{t}\right\vert ^{2}dt\right)
^{p}\right\vert \right]  =0.
\]
\end{corollary}

Now we prove the well-posedness of (\ref{new-form-y1}).

\begin{lemma}
\label{exist-y1-lem} Let Assumptions \ref{assum-1} and \ref{assum-2} hold. Then there
exists a unique solution $(\hat{Y}(\cdot),\hat{Z}(\cdot))\in\mathcal{S}%
_{\mathcal{F}}^{\beta}([0,T];\mathbb{R})\times\mathcal{M}_{\mathcal{F}%
}^{2,\beta}([0,T];\mathbb{R}^{d})$ to (\ref{new-form-y1}) for all
$\beta\in\left(  1,2\bar{p}^{\ast}\right)  $.
\end{lemma}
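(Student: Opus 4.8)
The plan is to recognize (\ref{new-form-y1}) as a linear BSDE of exactly the form treated in Proposition \ref{est-linear-prop} and to verify its three hypotheses under the natural identification
\[
\lambda_{s}=f_{y}(s),\quad\mu_{s}=f_{z}(s),\quad\varphi_{s}=\left(f_{x}(s)\right)^{\intercal}X_{1}(s)+\left(f_{u}(s)\right)^{\intercal}\hat{u}_{s},\quad\xi=\left(\Phi_{x}(\bar{X}_{T})\right)^{\intercal}X_{1}(T).
\]
With this reading, the lemma follows once I check that $\left\Vert\lambda\right\Vert_{\infty}<\infty$, that $\mu\cdot W\in BMO$, and that $(\xi,\varphi)\in L_{\mathcal{F}_{T}}^{\beta_{0}}(\Omega;\mathbb{R})\times\mathcal{M}_{\mathcal{F}}^{1,\beta_{0}}([0,T];\mathbb{R})$ for a suitable $\beta_{0}>p_{f_{z}}^{\ast}$.

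The first two hypotheses are immediate. We have $\left\Vert\lambda\right\Vert_{\infty}=\left\Vert f_{y}\right\Vert_{\infty}<\infty$ by Assumption \ref{assum-1}(iii), while the bound $\left\vert f_{z}(s)\right\vert\leq L_{2}+\gamma\left\vert\bar{Z}_{s}\right\vert$ combined with $\bar{Z}\cdot W\in BMO$ and the estimate $\left\Vert\bar{Z}\cdot W\right\Vert_{BMO_{2}}^{2}<A$ from Theorem \ref{state-eq-exist-th} yields $f_{z}\cdot W\in BMO$; moreover Remark \ref{remark-C-uniform} applied at $\varepsilon=0$ (where $\tilde{f}_{z}^{0}=f_{z}$) gives $p_{f_{z}}^{\ast}<\bar{p}^{\ast}<2\bar{p}^{\ast}$, so a legitimate choice of $\beta_{0}$ up to $2\bar{p}^{\ast}$ is available.

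The substantive step is to take $\beta_{0}=2\bar{p}^{\ast}$ and verify the data integrability. For $\xi$, the boundedness of $\Phi_{x}$ reduces matters to $X_{1}(T)\in L^{4\bar{p}^{\ast}}\subset L^{2\bar{p}^{\ast}}$, which holds by Lemma \ref{est-forward-expansion-lem}, whose range of exponents includes the endpoint $4\bar{p}^{\ast}$. For $\varphi$ the quadratic growth of $f_{x}$ and $f_{u}$ in $z$ is where care is needed. Using $\left\Vert\bar{Y}\right\Vert_{\infty}<A$ and the growth bounds in Assumption \ref{assum-1}(iii), I would dominate $\int_{0}^{T}\left\vert\varphi_{s}\right\vert ds$ by a constant multiple of $\bigl(\sup_{s}\left\vert X_{1}(s)\right\vert+\sup_{s}\left\vert\hat{u}_{s}\right\vert\bigr)\int_{0}^{T}\bigl(1+\left\vert\bar{Z}_{s}\right\vert^{2}+\left\vert\bar{u}_{s}\right\vert\bigr)ds$ and then apply Cauchy--Schwarz. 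The first factor lies in $L^{4\bar{p}^{\ast}}$ by Lemma \ref{est-forward-expansion-lem} and the admissibility condition (\ref{control-integrable}); the second lies in $L^{4\bar{p}^{\ast}}$ because the energy inequality packaged in Remark \ref{Zu-bounded} controls all moments of $\int_{0}^{T}\left\vert\bar{Z}_{s}\right\vert^{2}ds$ and $\int_{0}^{T}\left\vert\bar{u}_{s}\right\vert ds\leq T\sup_{s}\left\vert\bar{u}_{s}\right\vert$ is integrable of that order. This is precisely the Hölder computation already carried out in the proof of Lemma \ref{est-epsilon-bar}, and it delivers $\varphi\in\mathcal{M}_{\mathcal{F}}^{1,2\bar{p}^{\ast}}([0,T];\mathbb{R})$.

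With all three hypotheses in hand and $\beta_{0}=2\bar{p}^{\ast}>p_{f_{z}}^{\ast}$, Proposition \ref{est-linear-prop} produces a unique solution $(Y_{1},Z_{1})\in\bigcap_{1<\beta<2\bar{p}^{\ast}}\bigl(\mathcal{S}_{\mathcal{F}}^{\beta}([0,T];\mathbb{R})\times\mathcal{M}_{\mathcal{F}}^{2,\beta}([0,T];\mathbb{R}^{d})\bigr)$, which is exactly the assertion. I expect the only genuine obstacle to be the integrability of $\varphi$ at the endpoint exponent $2\bar{p}^{\ast}$: this is the reason admissible controls are required to have $4\bar{p}^{\ast}$-integrable supremum in (\ref{control-integrable}) and the reason the forward estimate in Lemma \ref{est-forward-expansion-lem} is stated up to and including $4\bar{p}^{\ast}$, so that Cauchy--Schwarz closes precisely at $\beta_{0}=2\bar{p}^{\ast}$.
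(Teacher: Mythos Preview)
Your proposal is correct and is exactly the argument the paper has in mind: the text states that Lemma \ref{exist-y1-lem} ``is a direct application of Proposition \ref{est-linear-prop},'' and you have supplied precisely the verification of hypotheses (boundedness of $f_{y}$, $f_{z}\cdot W\in BMO$ with $p_{f_{z}}^{\ast}<\bar{p}^{\ast}$ via Remark \ref{remark-C-uniform}, and $(\xi,\varphi)\in L^{2\bar{p}^{\ast}}\times\mathcal{M}^{1,2\bar{p}^{\ast}}$ via the H\"older splitting already used in Lemma \ref{est-epsilon-bar}) that makes that application go through.
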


\begin{proof}
Under Assumption \ref{assum-1}, we get
\[%
\begin{array}
[c]{l}%
\left\vert \left(  g_{x}(t)\right)  ^{\intercal}\hat{X}_{t}\right\vert \leq
C\left(  1+\left\vert \bar{Y}_{t}\right\vert +\left\vert \bar{Z}%
_{t}\right\vert ^{2}\right)  \left\vert \hat{X}_{t}\right\vert ,\\
\left\vert \langle g_{\theta}(t),\hat{\theta}_{t}\rangle\right\vert \leq
C\left(  1+\left\vert \bar{Y}_{t}\right\vert +\left\vert \bar{Z}%
_{t}\right\vert \right)  \left\vert \hat{\theta}_{t}\right\vert .
\end{array}
\]
For any $\beta\in\left(  1,2\bar{p}^{\ast
}\right)  $, by Theorem \ref{state-eq-exist-th} and Lemma \ref{well-posedness-hat-X-theta},
one can check that (\ref{new-form-y1}) verifies the conditions in Proposition \ref{est-linear-prop}.
So it admits a unique solution $(\hat{Y}(\cdot),\hat{Z}(\cdot))\in \bigcap_{1<\beta<2\bar{p}^{\ast}}\left(\mathcal{S}
_{\mathcal{F}}^{\beta}([0,T];\mathbb{R})\times\mathcal{M}_{\mathcal{F}
}^{2,\beta}([0,T];\mathbb{R}^{d})\right)$ to (\ref{new-form-y1}).
Moreover, from (\ref{est-linear-data}), since $\bar{Y}(\cdot) \in L_{\mathcal{F}}^{\infty}([0,T];\mathbb{R}^{n})$, we have
\[%
\begin{array}
[c]{l}%
\mathbb{E}\left[  \sup\limits_{t\in\lbrack0,T]}\left\vert \hat{Y}%
_{t}\right\vert ^{\beta}+\left(  \int_{0}^{T}\left\vert \hat{Z}_{t}\right\vert
^{2}dt\right)  ^{\frac{\beta}{2}}\right]  \\
\leq C\left(  \mathbb{E}\left[  \left\vert \left(  \Phi_{x}(\bar{\xi})\right)
^{\intercal}(\xi-\bar{\xi})\right\vert ^{2\bar{p}^{\ast}}+\left(  \int_{0}%
^{T}\left\vert \left(  g_{x}(t)\right)  ^{\intercal}\hat{X}_{t}+\left\langle
g_{\theta}(t),\hat{\theta}_{t}\right\rangle \right\vert dt\right)  ^{2\bar
{p}^{\ast}}\right]  \right)  ^{\frac{\beta}{2\bar{p}^{\ast}}}\\
\leq C\left\{  \left(  \mathbb{E}\left[  \left\vert (\xi-\bar{\xi})\right\vert
^{2\bar{p}^{\ast}}\right]  \right)  ^{\frac{\beta}{2\bar{p}^{\ast}}}\right.
\\
\ \ +\left.  \left(  \mathbb{E}\left[  \left(  \int_{0}^{T}\left(
1+\left\vert \bar{Z}_{t}\right\vert ^{2}\right)  \left\vert \hat{X}%
_{t}\right\vert dt\right)  ^{2\bar{p}^{\ast}}+\left(  \int_{0}^{T}\left(
1+\left\vert \bar{Z}_{t}\right\vert \right)  \left\vert \hat{\theta}%
_{t}\right\vert dt\right)  ^{2\bar{p}^{\ast}}\right]  \right)  ^{\frac{\beta
}{2\bar{p}^{\ast}}}\right\}.
\end{array}
\]
Recall $\left(\hat{X}(\cdot),\hat{\theta}(\cdot)\right)\in
\mathcal{S}_{\mathcal{F}}^{4\bar{p}^{\ast}}([0,T];\mathbb{R}^{n})\times \mathcal{M}_{\mathcal{F}}^{2,4\bar{p}^{\ast}}([0,T];\mathbb{R}^{n\times d})$ from Lemma \ref{well-posedness-hat-X-theta}.
Then, similarly to the proof of the estimate (\ref{est-eta-zeta}),
we can obtain
\[
\mathbb{E}\left[  \sup\limits_{t\in\lbrack0,T]}\left\vert \hat{Y}%
_{t}\right\vert ^{\beta}+\left(  \int_{0}^{T}\left\vert \hat{Z}_{t}\right\vert
^{2}dt\right)  ^{\frac{\beta}{2}}\right]  <\infty
\]
for all $\beta\in\left(  1,2\bar{p}^{\ast}\right)  $, which accomplishes the proof.
\end{proof}

Now we give the main result of this subsection.

\begin{lemma}
\label{est-eta2-zeta2}
Let Assumptions \ref{assum-1} and \ref{assum-2} hold. Then, for any
$\beta\in\left(  1,\bar{p}^{\ast}\right)  $,%
\begin{equation}
\mathbb{E}\left[  \sup\limits_{t\in\lbrack0,T]}\left\vert Y_{t}^{\varepsilon
}-\bar{Y}_{t}-\varepsilon\hat{Y}_{t}\right\vert ^{\beta}+\left(  \int_{0}%
^{T}\left\vert Z_{t}^{\varepsilon}-\bar{Z}_{t}-\varepsilon\hat{Z}%
_{t}\right\vert ^{2}dt\right)  ^{\frac{\beta}{2}}\right]  =o\left(
\varepsilon^{\beta}\right)  .%
\end{equation}

\end{lemma}

\begin{proof}
We use the notation $\left(  \chi_{t}^{1,\varepsilon},\eta_{t}^{1,\varepsilon
},\zeta_{t}^{1,\varepsilon},\Theta_{t}^{1,\varepsilon}\right)  $, $\tilde
{g}_{x}^{\varepsilon}(t)$, $\tilde{g}_{y}^{\varepsilon}(t)$, $\tilde{g}%
_{z}^{\varepsilon}(t)$, $\tilde{g}_{\theta}^{\varepsilon}(t)$ introduced in
the proof of Lemma \ref{est-epsilon-bar}. Setting $\left(  \chi_{t}%
^{2,\varepsilon},\eta_{t}^{2,\varepsilon},\zeta_{t}^{2,\varepsilon},\Theta
_{t}^{2,\varepsilon}\right)  :=\left(  \chi_{t}^{1,\varepsilon}-\varepsilon
\hat{X}_{t},\eta_{t}^{1,\varepsilon}-\varepsilon\hat{Y}_{t},\zeta
_{t}^{1,\varepsilon}-\varepsilon\hat{Z}_{t},\Theta_{t}^{1,\varepsilon
}-\varepsilon\hat{\theta}_{t}\right)  $, we have%
\begin{equation}
\left\{
\begin{array}
[c]{rl}%
d\eta_{t}^{2,\varepsilon}= & -\left[  \left(  g_{x}(t)\right)  ^{\intercal
}\chi_{t}^{2,\varepsilon}+g_{y}(t)\eta_{t}^{2,\varepsilon}+\left(
g_{z}(t)\right)  ^{\intercal}\zeta_{t}^{2,\varepsilon}+\left\langle g_{\theta
}(t),\Theta_{t}^{2,\varepsilon}\right\rangle +R^{\varepsilon}(t)\right]  dt\\
& +\left(  \zeta_{t}^{2,\varepsilon}\right)  ^{\intercal}dW_{t},\ \ t\in
\lbrack0,T],\\
\eta_{T}^{2,\varepsilon}= & \left[  \int_{0}^{1}\left(  \Phi_{x}(\bar{\xi
}+\lambda(\xi^{\varepsilon}-\bar{\xi}))-\Phi_{x}(\bar{\xi})\right)
d\lambda\right]  ^{\intercal}(\xi^{\varepsilon}-\bar{\xi})
\end{array}
\right.  \label{eta-eps-1}%
\end{equation}
where
\[%
\begin{array}
[c]{rl}%
R^{\varepsilon}(t)= & \left[  \tilde{g}_{x}^{\varepsilon}(t)-g_{x}(t)\right]
^{\intercal}\chi_{t}^{1,\varepsilon}+\left[  \tilde{g}_{y}^{\varepsilon
}(t)-g_{y}(t)\right]  \eta_{t}^{1,\varepsilon}\\
& +\left[  \tilde{g}_{z}^{\varepsilon}(t)-g_{z}(t)\right]  ^{\intercal}%
\zeta_{t}^{1,\varepsilon}+\left\langle \tilde{g}_{\theta}^{\varepsilon
}(t)-g_{\theta}(t),\Theta_{t}^{1,\varepsilon}\right\rangle ,
\end{array}
\]
For any $\beta\in\left(  1,\bar{p}^{\ast}\right)  $ and $\beta_{0}\in\left(
\beta\vee p_{g_{z}}^{\ast},\bar{p}^{\ast}\right)  $, by Proposition
\ref{est-linear-prop}, we have
\begin{equation}%
\begin{array}
[c]{l}%
\mathbb{E}\left[  \sup\limits_{t\in\lbrack0,T]}\left\vert \eta_{t}%
^{2,\varepsilon}\right\vert ^{\beta}  +  \left(
{\displaystyle\int_{0}^{T}}
\left\vert \zeta_{t}^{2,\varepsilon}\right\vert ^{2}dt\right)  ^{\frac{\beta
}{2}}\right]  \\
\leq C\left(  \mathbb{E}\left[  \left\vert \eta_{T}^{2,\varepsilon}\right\vert
^{\beta_{0}}+\left(
{\displaystyle\int_{0}^{T}}
\left\vert \left(  g_{x}(t)\right)  ^{\intercal}\chi_{t}^{2,\varepsilon
}+\left\langle g_{\theta}(t),\Theta_{t}^{2,\varepsilon}\right\rangle
+R^{\varepsilon}(t)\right\vert dt\right)  ^{\beta_{0}}\right]  \right)
^{\frac{\beta}{\beta_{0}}}.
\end{array}
\label{est-eta2-zeta2-pre}%
\end{equation}
We only estimate the most difficult terms in (\ref{est-eta2-zeta2-pre}) as
follows. The other terms are similar.

Since $\left\vert g_{\theta}(t)\right\vert \leq L\left(  1+\left\vert
\bar{Y}_{t}\right\vert +\left\vert \bar{Z}_{t}\right\vert \right)  $ and
$\left\Vert \bar{Y}\right\Vert _{\infty}<\infty$, by Corollary
\ref{Zu-bounded} and (\ref{est-forward-expansion-2}), it follows from
H\"{o}lder's inequality that%
\[%
\begin{array}
[c]{l}%
\mathbb{E}\left[  \left(
{\displaystyle\int_{0}^{T}}
\left\vert \left\langle g_{\theta}(t),\Theta_{t}^{2,\varepsilon}\right\rangle\right\vert dt\right)  ^{\beta_{0}}\right]  \\
\leq C\mathbb{E}\left[  \left(
{\displaystyle\int_{0}^{T}}
\left(  1+\left\vert \bar{Z}_{t}\right\vert \right)  \left\vert \Theta
_{t}^{2,\varepsilon}\right\vert dt\right)  ^{\beta_{0}}\right]  \\
\leq C\left(  \mathbb{E}\left[  \left(
{\displaystyle\int_{0}^{T}}
\left\vert \Theta_{t}^{2,\varepsilon}\right\vert ^{2}dt\right)  ^{\beta_{0}%
}\right]  \right)  ^{\frac{1}{2}}\left(  \mathbb{E}\left[  \left(
{\displaystyle\int_{0}^{T}}
\left(  1+\left\vert \bar{Z}_{t}\right\vert ^{2}\right)  dt\right)
^{\beta_{0}}\right]  \right)  ^{\frac{1}{2}}\\
=o(\varepsilon^{\beta_{0}}).
\end{array}
\]

As $g_{z}$ is continuous with $(x,y,z,\theta)$, from
(\ref{est-forward-expansion-1}) and (\ref{est-ep-yz}), we have $\tilde{g}%
_{z}^{\varepsilon}(\cdot)$ converges to $g_{z}(\cdot)$ in the product measure
$\lambda\otimes\mathbb{P}$, where $\lambda$ denotes the Lebesgue measure on
$[0,T]$. Then, since $\left\vert \tilde{g}_{z}^{\varepsilon}(t)-g_{z}%
(t)\right\vert ^{2}\leq C\left(  1+\left\vert \bar{Z}_{t}\right\vert
^{2}+\left\vert Z_{t}^{\varepsilon}\right\vert ^{2}\right)  $, by Corollary
\ref{cor-1} and the generalized dominated convergence theorem (see Problem
16.6 (a) in \cite{Billingsley} or Problem 12 in \cite{Dudley}), we obtain%
\[
\lim_{\varepsilon\rightarrow 0}\mathbb{E}\left[  \int_{0}^{T}\left\vert
\tilde{g}_{z}^{\varepsilon}(t)-g_{z}(t)\right\vert ^{2}dt\right]  =0,
\]
which implies that $\left(  \int_{0}^{T}\left\vert \tilde{g}_{z}^{\varepsilon
}(t)-g_{z}(t)\right\vert ^{2}dt\right)  ^{\beta_{0}}\rightarrow0$ in
$\mathbb{P}$ as $\varepsilon\rightarrow0$. Hence, applying Corollary
\ref{cor-1} and the generalized dominated convergence theorem again, we get
\begin{equation}
\lim_{\varepsilon\rightarrow 0}\mathbb{E}\left[  \left(  \int_{0}%
^{T}\left\vert \tilde{g}_{z}^{\varepsilon}(t)-g_{z}(t)\right\vert
^{2}dt\right)  ^{\beta_{0}}\right]  =0.\label{est-fzepsilon-fzbar}%
\end{equation}
Consequently, due to (\ref{est-ep-yz}) and (\ref{est-fzepsilon-fzbar}), we obtain
\begin{equation}%
\begin{array}
[c]{l}%
\mathbb{E}\left[  \left(  \int_{0}^{T}\left\vert \tilde{g}_{z}^{\varepsilon
}(t)-g_{z}(t)\right\vert \left\vert \zeta^{1,\varepsilon}(t)\right\vert
dt\right)  ^{\beta_{0}}\right]  \\
\leq C\mathbb{E}\left[  \left(  \int_{0}^{T}\left\vert \tilde{g}%
_{z}^{\varepsilon}(t)-g_{z}(t)\right\vert ^{2}dt\right)  ^{\frac{\beta_{0}}%
{2}}\left(  \int_{0}^{T}\left\vert \zeta_{t}^{1,\varepsilon}\right\vert
^{2}dt\right)  ^{\frac{\beta_{0}}{2}}\right]  \\
\leq C\left(  \mathbb{E}\left[  \left(  \int_{0}^{T}\left\vert \tilde{g}%
_{z}^{\varepsilon}(t)-g_{z}(t)\right\vert ^{2}dt\right)  ^{\beta_{0}}\right]
\right)  ^{\frac{1}{2}}\left(  \mathbb{E}\left[  \left(  \int_{0}%
^{T}\left\vert \zeta_{t}^{1,\varepsilon}\right\vert ^{2}dt\right)  ^{\beta
_{0}}\right]  \right)  ^{\frac{1}{2}}\\
=o\left(  \varepsilon^{\beta_{0}}\right)  .
\end{array}
\label{est-exp-fzeps-fz-zeta1}%
\end{equation}
Consequently, we have
\[
\mathbb{E}\left[  \sup\limits_{t\in\lbrack0,T]}\left\vert \eta_{t}%
^{2,\varepsilon}\right\vert ^{\beta}  +  \left(
{\displaystyle\int_{0}^{T}}
\left\vert \zeta_{t}^{2,\varepsilon}\right\vert ^{2}dt\right)  ^{\frac{\beta
}{2}}\right] = o(\varepsilon^{\beta}),
\]
which accomplishes the proof.
\end{proof}

\subsection{Variational inequality}
In this subsection, we employ Ekeland's variational principle \cite{Ekeland-1974} to deal with the initial constraint $X_{0}^{\xi}=x_{0}$.

Given the optimal $\bar{\xi}$, we introduce a mapping $J_{\delta}: \mathcal{U}_{ad}\longmapsto \mathbb{R}$ by
\[
J_{\delta}(\xi):=\sqrt{\left\vert X_{0}^{\xi}-x_{0}\right\vert ^{2}+\left(
\max\left\{  0,J(\xi)-J(\bar{\xi})+\delta\right\}  \right)  ^{2}},
\]
where $x_{0}$ is the given initial state constraint and $\delta$ is an arbitrary positive constant. Let us check that the mappings $\xi \longmapsto \left \vert X_{0}^{\xi} - x_{0} \right \vert^{2}$, $\xi \longmapsto J(\xi)$, both from $\mathcal{U}_{ad}$ to $\mathbb{R}$, are continuous functional on $\mathcal{U}_{ad}$.

\begin{lemma}
\label{lem-Ekeland-cost-functional-continuity}
Let Assumptions \ref{assum-1} and \ref{assum-2} hold. Then $\left \vert X_{0}^{\xi} - x_{0} \right \vert^{2}$ and $J(\xi)$ are both continuous functional on $\mathcal{U}_{ad}$.
\end{lemma}

\begin{proof}
Under Assumptions \ref{assum-1} and \ref{assum-2}, since $Y_{0}^{\xi}$ is bounded from Theorem \ref{state-eq-exist-th} and $d(\xi_{m},\xi)\rightarrow 0$ implies that $\mathbb{E}[\left\vert \xi_{m} \right\vert^{4\bar{p}^{\ast}}]\rightarrow \mathbb{E}[\left\vert \xi\right\vert^{4\bar{p}^{\ast}} ]$
as $m \rightarrow \infty$ for any $\{\xi_{m}\}_{m\in\mathbb{N}_{+}}$, $\xi$ in $\mathcal{U}_{ad}$, we need only to show that $X_{0}^{\xi}$
and $Y_{0}^{\xi}$ are continuous on $\mathcal{U}_{ad}$. To do this, for
any given $\xi_{1}$, $\xi_{2}\in\mathcal{U}_{ad}$, let $\left(
X_{1}(\cdot),Y_{1}(\cdot),Z_{1}(\cdot),\theta_{1}(\cdot)\right)  $, $\left(  X_{2}(\cdot),Y_{2}%
(\cdot),Z_{2}(\cdot),\theta_{2}(\cdot)\right)  $ are respectively corresponding state
trajectories to $\xi_{1}$, $\xi_{2}$, satisfying (\ref{state-eq-new}). Then, by Proposition 5.1 in \cite{Karoui-Peng-Quenez-1997}, we have%
\begin{equation}
\mathbb{E}\left[  \sup\limits_{t\in\lbrack0,T]}\left\vert X_{1}(t)-X_{2}%
(t)\right\vert ^{4\bar{p}^{\ast}}+\left(  \int_{0}^{T}\left\vert \theta
_{1}(t)-\theta_{2}(t)\right\vert ^{2}dt\right)  ^{2\bar{p}^{\ast}}\right]
\leq C\mathbb{E}\left[  \left\vert \xi_{1}-\xi_{2}\right\vert ^{4\bar{p}%
^{\ast}}\right]  ,\label{ineq-X-xi-continuous}%
\end{equation}
which implies the continuity of $X_{0}^{\xi}$, where $C$ is a positive
constant depending on $T$, $L$, $\bar{p}^{\ast}$. On the other hand, for
$t\in\lbrack0,T]$, note that%
\[%
\begin{array}
[c]{rl}
& g(t,X_{1}(t),Y_{1}(t),Z_{1}(t),\theta_{1}(t))-g(t,X_{2}(t),Y_{2}%
(t),Z_{2}(t),\theta_{2}(t))\\
= & \lambda_{t}(Y_{1}(t)-Y_{2}(t))+\mu_{t}^{\intercal}(Z_{1}(t)-Z_{2}%
(t))+\varphi_{t},
\end{array}
\]
where
\[%
\begin{array}
[c]{l}%
\lambda_{t}=\frac{g(t,X_{1}(t),Y_{1}(t),Z_{1}(t),\theta_{1}(t))-g(t,X_{1}%
(t),Y_{2}(t),Z_{1}(t),\theta_{1}(t))}{Y_{1}(t)-Y_{2}(t)}\mathbf{1}%
_{\{Y_{1}\neq Y_{2}\}}(t),\\
\mu_{t}=\frac{g(t,X_{1}(t),Y_{2}(t),Z_{1}(t),\theta_{1}(t))-g(t,X_{1}%
(t),Y_{2}(t),Z_{2}(t),\theta_{1}(t))}{Z_{1}(t)-Z_{2}(t)}\mathbf{1}%
_{\{Z_{1}\neq Z_{2}\}}(t),\\
\varphi_{t}=g(t,X_{1}(t),Y_{2}(t),Z_{2}(t),\theta_{1}(t))-g(t,X_{2}%
(t),Y_{2}(t),Z_{2}(t),\theta_{2}(t)).
\end{array}
\]
Under Assumptions \ref{assum-1} and \ref{assum-2}, one can verify that $\lambda(\cdot)\in
L_{\mathcal{F}}^{\infty}([0,T];\mathbb{R})$, $\mu\cdot W\in\mathrm{BMO}$ with
$\left\Vert \mu\cdot W\right\Vert _{\mathrm{BMO}_{2}}^{2}<\Psi(\bar{p})$. Moreover,
by Theorem \ref{state-eq-exist-th}, Corollary \ref{Zu-bounded} and (\ref{ineq-X-xi-continuous}), we deduce $\varphi(\cdot)\in\mathcal{M}%
_{\mathcal{F}}^{1,2\bar{p}^{\ast}}([0,T];\mathbb{R})$. Then, by Proposition
\ref{est-linear-prop}, for any $\beta\in\left(  1,2\bar{p}^{\ast}\right)  $, we have
\[%
\begin{array}
[c]{rl}
& \mathbb{E}\left[  \sup\limits_{t\in\lbrack0,T]}\left\vert Y_{1}%
(t)-Y_{2}(t)\right\vert ^{\beta}+\left(  \int_{0}^{T}\left\vert Z_{1}%
(t)-Z_{2}(t)\right\vert ^{2}dt\right)  ^{\beta}\right]  \\
\leq & C\left(  \mathbb{E}\left[  \left\vert \Phi(\xi_{1})-\Phi(\xi
_{2})\right\vert ^{2\bar{p}^{\ast}}+\left(  \int_{0}^{T}\left\vert \varphi
_{t}\right\vert dt\right)  ^{2\bar{p}^{\ast}}\right]  \right)  ^{\frac{\beta
}{2\bar{p}^{\ast}}},
\end{array}
\]
where $C$ is a positive constant depending on $T$, $L$, $\left \Vert g_{y} \right \Vert_{\infty}$, $\bar{p}^{\ast}$,
$\beta$, $A$. Recall that $\Phi$ is Lipschitz continuous and observe that%
\[
\left\vert \varphi_{t}\right\vert \leq L\left[  \left(  1+\left\vert
Y_{2}(t)\right\vert +\left\vert Z_{2}(t)\right\vert ^{2}\right)  \left\vert
X_{1}(t)-X_{2}(t)\right\vert +\left(  1+\left\vert Y_{2}(t)\right\vert
+\left\vert Z_{2}(t)\right\vert \right)  \left\vert \theta_{1}(t)-\theta
_{2}(t)\right\vert \right]  .
\]
Using (\ref{ineq-X-xi-continuous}), similarly to the proof of (\ref{est-eta-zeta}), we obtain
\begin{equation}
\label{ineq-Y-xi-continuous}
\mathbb{E}\left[  \sup\limits_{t\in\lbrack0,T]}\left\vert Y_{1}(t)-Y_{2}%
(t)\right\vert ^{\beta}+\left(  \int_{0}^{T}\left\vert Z_{1}(t)-Z_{2}%
(t)\right\vert ^{2}dt\right)  ^{\beta}\right]  \leq C\left(  \mathbb{E}\left[
\left\vert \xi_{1}-\xi_{2}\right\vert ^{4\bar{p}^{\ast}}\right]  \right)
^{\frac{\beta}{4\bar{p}^{\ast}}},
\end{equation}
which implies the continuity of $Y_{0}^{\xi}$. The proof is complete.
\end{proof}

\begin{theorem}
\label{thm-variational-inequality}
Let Assumptions \ref{assum-1} and \ref{assum-2} hold, $\bar{\xi}$ be an optimal control to (\ref{state-eq-new})-(\ref{cost-functional}). Then there exist a real number $a_{0} \geq 0$ and $a_{1} \in \mathbb{R}^{n}$, with $\left \vert a_{0} \right \vert^{2} + \left \vert a_{1} \right \vert^{2} \neq 0$, such that the following variational inequality holds
\begin{equation}
\mathbb{E}\left[\left\langle
a_{1},\hat{X}_{0}\right\rangle +a_{0}\left\langle h_{x}(\bar{\xi},\bar{Y}_{0}),\xi-\bar{\xi
}\right\rangle +a_{0}h_{y}(\bar{\xi},\bar{Y}_{0})\hat{Y}_{0} \right]  \geq0,\label{variational-inequality}%
\end{equation}
where $\hat{X}_{0}$, $\hat{Y}_{0}$ are solutions to (\ref{new-form-x1}), (\ref{new-form-y1}) respectively.
\end{theorem}

\begin{proof}
Due to Lemma \ref{lem-Ekeland-cost-functional-continuity}, $J_{\delta}(\cdot)$ is continuous on $\mathcal{U}_{ad}[0,T]$. In addition, it is easy to check the following properties hold:
\[%
\begin{array}
[c]{l}%
J_{\delta}(\bar{\xi})=\delta;\\
J_{\delta}(\xi)>0,\text{ }\forall\xi\in\mathcal{U}_{ad};\\
J_{\delta}(\bar{\xi})\leq\inf\limits_{\xi\in\mathcal{U}_{ad}}J_{\delta
}(\xi)+\delta.
\end{array}
\]
Thus, from Ekeland's variational principle \cite{Ekeland-1974}, $\exists \ \xi_{\delta} \in \mathcal{U}_{ad}$ such that
\[%
\begin{array}
[c]{l}%
\text{(i) }J_{\delta}(\xi_{\delta}) \leq J_{\delta}(\bar{\xi});\\
\text{(ii) }d(\xi_{\delta},\bar{\xi}) \leq \sqrt{\delta};\\
\text{(iii) }J_{\delta}(\xi)+\sqrt{\delta}d(\xi,\xi_{\delta}) \geq J_{\delta}(\xi_{\delta}),\text{ }\forall \xi \in \mathcal{U}_{ad}.
\end{array}
\]

For any $\xi\in\mathcal{U}_{ad}$, set $\xi_{\delta}^{\varepsilon}%
=\xi_{\delta}+\varepsilon(\xi-\xi_{\delta})$, $\varepsilon\in\lbrack0,1]$. Let
$\left(  X_{\delta}^{\varepsilon}(\cdot),Y_{\delta}^{\varepsilon}%
(\cdot),Z_{\delta}^{\varepsilon}(\cdot),\theta_{\delta}^{\varepsilon}%
(\cdot)\right)  $ (resp. $\left(  X_{\delta}(\cdot),Y_{\delta}(\cdot
),Z_{\delta}(\cdot),\theta_{\delta}(\cdot)\right)  $) be the state trajectory
corresponding to $\xi_{\delta}^{\varepsilon}$ (resp. $\xi_{\delta}$), and
$\left(  \hat{X}_{\delta}(\cdot),\hat{\theta}_{\delta}(\cdot)\right)  $,
$\left(  \hat{Y}_{\delta}(\cdot),\hat{Z}_{\delta}(\cdot)\right)  $ be the
solutions to (\ref{new-form-x1}), (\ref{new-form-y1}) respectively in which $\bar{\xi}$ is substituted by
$\xi_{\delta}$.
From (iii) the above, we conclude%
\begin{equation}
J_{\delta}(\xi_{\delta}^{\varepsilon})-J_{\delta}(\xi_{\delta})+\sqrt{\delta
}d(\xi_{\delta}^{\varepsilon},\xi_{\delta})\geq0.\label{application-Ekeland}%
\end{equation}
On the other hand, similarly to (\ref{est-forward-expansion-2}) and (\ref{est-eta2-zeta2}), we have%
\[
\lim_{\varepsilon\rightarrow0}\frac{1}{\varepsilon}\mathbb{E}\left[
\sup\limits_{t\in\lbrack0,T]}\left\vert X_{\delta}^{\varepsilon}%
(\cdot)-X_{\delta}(\cdot)-\varepsilon\hat{X}_{\delta}(\cdot)\right\vert
\right]  =0,
\]%
\[
\lim_{\varepsilon\rightarrow0}\frac{1}{\varepsilon}\mathbb{E}\left[
\sup\limits_{t\in\lbrack0,T]}\left\vert Y_{\delta}^{\varepsilon}(t)-Y_{\delta
}(t)-\varepsilon\hat{Y}_{\delta}(t)\right\vert \right]  =0.
\]
Thus we obtain $X_{\delta}^{\varepsilon}(0)-X_{\delta}(0)=\varepsilon\hat
{X}_{\delta}(0)+o(\varepsilon)$ and $Y_{\delta}^{\varepsilon}(0)-Y_{\delta
}(0)=\varepsilon\hat{Y}_{\delta}(0)+o(\varepsilon)$, which leads to the
following expansions:%
\[
\left\vert X_{\delta}^{\varepsilon}(0)-x_{0}\right\vert ^{2}-\left\vert
X_{\delta}(0)-x_{0}\right\vert ^{2}=2\varepsilon\left\langle X_{\delta
}(0)-x_{0},\hat{X}_{\delta}(0)\right\rangle +o(\varepsilon),
\]%
\[%
\begin{array}
[c]{rl}
& \left(  J(\xi_{\delta}^{\varepsilon})-J(\bar{\xi})+\delta\right)
^{2}-\left(  J(\xi_{\delta})-J(\bar{\xi})+\delta\right)  ^{2}\\
= & \left(  \mathbb{E}\left[  h(\xi_{\delta}^{\varepsilon},Y_{\delta
}^{\varepsilon}(0))-h(\bar{\xi},\bar{Y}_{0})\right]  +\delta\right)
^{2}-\left(  \mathbb{E}\left[  h(\xi_{\delta},Y_{\delta}(0))-h(\bar{\xi}%
,\bar{Y}_{0})\right]  +\delta\right)  ^{2}\\
= & 2\varepsilon\left(  \mathbb{E}\left[  h(\xi_{\delta},Y_{\delta}%
(0))-h(\bar{\xi},\bar{Y}_{0})\right]  +\delta\right)  \\
& \cdot\mathbb{E}\left[  \left\langle h_{x}(\xi_{\delta},Y_{\delta}%
(0)),\xi-\xi_{\delta}\right\rangle +h_{y}(\xi_{\delta},Y_{\delta}(0))\hat
{Y}_{\delta}(0)\right]  +o(\varepsilon).
\end{array}
\]
Now we consider the following two cases:

\noindent \textbf{Case 1: }There exists $\varepsilon_{0}>0$ such that $J(\xi_{\delta}^{\varepsilon})-J(\bar{\xi})+\delta>0$ for all
$\varepsilon\in(0,\varepsilon_{0})$. In this case,%
\[%
\begin{array}
[c]{rl}
& \lim\limits_{\varepsilon\rightarrow0}\frac{J_{\delta}(\xi_{\delta
}^{\varepsilon})-J_{\delta}(\xi_{\delta})}{\varepsilon}\\
= & \lim\limits_{\varepsilon\rightarrow0}\frac{1}{J_{\delta}(\xi_{\delta
}^{\varepsilon})+J_{\delta}(\xi_{\delta})}\cdot\frac{J_{\delta}^{2}%
(\xi_{\delta}^{\varepsilon})-J_{\delta}^{2}(\xi_{\delta})}{\varepsilon}\\
= & \frac{1}{J_{\delta}(\xi_{\delta})}\left(  \left\langle X_{\delta}%
(0)-x_{0},\hat{X}_{\delta}(0)\right\rangle \right.  \\
& +\left.  \left(  J(\xi_{\delta})-J(\bar{\xi})+\delta\right)  \mathbb{E}%
\left[  \left\langle h_{x}(\xi_{\delta},Y_{\delta}(0)),\xi-\xi_{\delta
}\right\rangle +h_{y}(\xi_{\delta},Y_{\delta}(0))\hat{Y}_{\delta}(0)\right]
\right)  .
\end{array}
\]
Dividing (\ref{application-Ekeland}) by $\varepsilon$ and sending
$\varepsilon$ to $0$, we obtain%
\begin{equation}%
\begin{array}
[c]{rl}
& \left\langle a_{1}^{\delta},\hat{X}_{\delta}(0)\right\rangle +a_{0}^{\delta
}\mathbb{E}\left[  \left\langle h_{x}(\xi_{\delta},Y_{\delta}(0)),\xi
-\xi_{\delta}\right\rangle +h_{y}(\xi_{\delta},Y_{\delta}(0))\hat{Y}_{\delta
}(0)\right]  \\
\geq & -\sqrt{\delta}\left(  \mathbb{E}\left[  \left\vert \xi-\xi_{\delta
}\right\vert ^{4\bar{p}^{\ast}}\right]  \right)  ^{\frac{1}{4\bar{p}^{\ast}}},
\end{array}
\label{approxi-variational-ineq}%
\end{equation}
where%
\[
a_{1}^{\delta}=\frac{1}{J_{\delta}(\xi_{\delta})}\left(  X_{0}^{\xi_{\delta}%
}-x_{0}\right)  ,\text{ \ }a_{0}^{\delta}=\frac{1}{J_{\delta}(\xi_{\delta}%
)}\left(  J(\xi_{\delta})-J(\bar{\xi})+\delta\right)  .
\]

\noindent \textbf{Case 2}: There exists a positive sequence $\left\{  \varepsilon
_{n}\right\}  $ satisfying $\varepsilon_{n}\rightarrow0$, such that
$J(\xi_{\delta}^{\varepsilon
_{n}})-J(\bar{\xi})+\delta\leq0$. In this case, by its definition $J_{\delta}(\xi_{\delta
}^{\varepsilon_{n}})=\sqrt{\left\vert X_{0}^{\xi_{\delta}^{\varepsilon_{n}}%
}-x_{0}\right\vert ^{2}}$ for sufficiently large $n$. Since $J_{\delta}%
(\cdot)$ is continuous on $\mathcal{U}_{ad}[0,T]$, we conclude $J_{\delta}%
(\xi_{\delta})=\sqrt{\left\vert X_{0}^{\xi_{\delta}}-x_{0}\right\vert ^{2}}$.
Now we have%
\[
\lim\limits_{n\rightarrow\infty}\frac{J_{\delta}(\xi_{\delta}^{\varepsilon
_{n}})-J_{\delta}(\xi_{\delta})}{\varepsilon_{n}}=\lim\limits_{n\rightarrow
\infty}\frac{1}{J_{\delta}(\xi_{\delta}^{\varepsilon_{n}})+J_{\delta}%
(\xi_{\delta})}\cdot\frac{J_{\delta}^{2}(\xi_{\delta}^{\varepsilon_{n}%
})-J_{\delta}^{2}(\xi_{\delta})}{\varepsilon_{n}}=\frac{\left\langle
X_{0}^{\xi_{\delta}}-x_{0},\hat{X}_{\delta}(0)\right\rangle }{J_{\delta}%
(\xi_{\delta})}.
\]
Similar to Case 1, we derive $\left\langle a_{1}^{\delta},\hat{X}_{\delta
}(0)\right\rangle \geq-\sqrt{\delta}\left(  \mathbb{E}\left[  \left\vert
\xi-\xi_{\delta}\right\vert ^{4\bar{p}^{\ast}}\right]  \right)  ^{\frac
{1}{4\bar{p}^{\ast}}}$, where $a_{0}^{\delta}=0$, $a_{1}^{\delta}=\frac
{X_{0}^{\xi_{\delta}}-x_{0}}{J_{\delta}(\xi_{\delta})}$.

In summary, for both cases, we have $a_{0}^{\delta}\geq0$, $\left\vert
a_{0}^{\delta}\right\vert ^{2}+\left\vert a_{1}^{\delta}\right\vert ^{2}=1$
and (\ref{approxi-variational-ineq}). Hence, there exist a convergent
subsequence of $(a_{0}^{\delta},a_{1}^{\delta})$ whose limit is denoted by
$(a_{0},a_{1})$. Due to $d(\xi_{\delta},\bar{\xi})\leq\sqrt{\delta}$, we have
$\xi_{\delta}\rightarrow\bar{\xi}$ in $\mathcal{U}_{ad}$, as
$\delta\rightarrow0$. So, in (\ref{ineq-X-xi-continuous}) and (\ref{ineq-Y-xi-continuous}), substituting $\xi_{\delta}$, $\bar{\xi}$ for $\xi_{1}$, $\xi_{2}$ respectively, we deduce that
\begin{equation}
\lim_{\delta\rightarrow0}\mathbb{E}\left[  \sup\limits_{t\in\lbrack
0,T]}\left\vert X_{\delta}(t)-\bar{X}_{t}\right\vert ^{4\bar{p}^{\ast}%
}+\left(  \int_{0}^{T}\left\vert \theta_{\delta}(t)-\bar{\theta}%
_{t}\right\vert ^{2}dt\right)  ^{2\bar{p}^{\ast}}\right]  =0,
\label{differ-X-xi-delta-bar}%
\end{equation}
\begin{equation}
\lim_{\delta\rightarrow0}\mathbb{E}\left[  \sup\limits_{t\in\lbrack
0,T]}\left\vert Y_{\delta}(t)-\bar{Y}_{t}\right\vert ^{\beta}+\left(  \int%
_{0}^{T}\left\vert Z_{\delta}(t)-\bar{Z}_{t}\right\vert ^{2}dt\right)
^{\beta}\right]  =0,
\label{differ-Y-xi-delta-bar}%
\end{equation}
where $\beta\in(1,2\bar{p}^{\ast})$ are arbitrarily given. Then, from (\ref{differ-X-xi-delta-bar}) and (\ref{differ-Y-xi-delta-bar}), for $\beta \in \left( 1, \bar{p}^{\ast}\right)$, one can prove
\[
\lim_{\delta\rightarrow0}\mathbb{E}\left[  \sup\limits_{t\in\lbrack
0,T]}\left\vert \hat{Y}_{\delta}(t)-\hat{Y}_{t}\right\vert ^{\beta}+\left(
\int_{0}^{T}\left\vert \hat{Z}_{\delta}(t)-\hat{Z}_{t}\right\vert
^{2}dt\right)  ^{\beta}\right]  =0
\]
similarly to the proof of Lemma \ref{est-eta2-zeta2}, which implies $\hat{Y}_{\delta}(0)\rightarrow\hat{Y}_{0}$.
$\hat{X}_{\delta}(0)\rightarrow\hat{X}_{0}$ can be deduced by applying Proposition 5.1 in \cite{Karoui-Peng-Quenez-1997}.
All in all, let $\delta \rightarrow 0$ in (\ref{approxi-variational-ineq}), we get (\ref{variational-inequality}). The proof is complete.
\end{proof}

\subsection{Maximum principle}
In this subsection, we derive the stochastic maximum principle. To this end, we introduce the adjoint process $\left(
p(\cdot),q(\cdot)\right)  $ associated with the optimal admissible control
$\bar{\xi}$ to (\ref{state-eq-new})-(\ref{cost-functional}), which solves the following adjoint system:%
\begin{equation}
\left\{
\begin{array}
[c]{rl}%
dp_{t}= & \left(  l_{x}(t)p_{t}+g_{x}(t)q_{t}\right)  dt+\left(  l_{\theta
}(t)p_{t}+g_{\theta}(t)q_{t}\right)  dW_{t},\\
p_{0}= & a_{1},\\
dq_{t}= & g_{y}(t)q_{t}dt+\left(  g_{z}(t)\right)  ^{\intercal}q_{t}dW_{t},\\
q_{0}= & a_{0}h_{y}(\bar{\xi},\bar{Y}_{0}),\text{ \ }t\in\lbrack0,T],
\end{array}
\right.  \label{adj-eq}%
\end{equation}
where $l_{x}(t)$, $\left\{  l_{\theta}^{i}(t)\right\}  _{i=1,\ldots n}$,
$g_{x}(t)$, $g_{y}(t)$, $g_{z}(t)$ are defined by (\ref{notation-coefficient-optimal}),
$l_{\theta}(t)p_{t}:=\left(  \left(  l_{\theta}^{1}(t)\right)  ^{\intercal
}p_{t},\ldots,\left(  l_{\theta}^{n}(t)\right)  ^{\intercal}p_{t}\right)
^{\intercal}$, and $a_{0}$, $a_{1}$ are as in Theorem \ref{thm-variational-inequality}.

Now we prove the well-posedness of (\ref{adj-eq}).
\begin{lemma}
\label{adj-1st-lem}
Let Assumptions \ref{assum-1} and \ref{assum-2} hold.
Then (\ref{adj-eq}) admits a unique strong solution $\left( p(\cdot), q(\cdot) \right)$. Moreover, for any given $\beta \in (1,\bar{p})$, we have
\[
\mathbb{E}\left[  \sup_{t\in\lbrack0,T]}\left\vert p_{t}\right\vert ^{\beta
}+\sup_{t\in\lbrack0,T]}\left\vert q_{t}\right\vert ^{\beta}\right]  <\infty.
\]
\end{lemma}

\begin{proof}
In (\ref{adj-eq}), we first consider the SDE where $q(\cdot)$ satisfies because the coefficients involved in it are no longer bounded. Under Assumptions \ref{assum-1} and \ref{assum-2}, one can check the coefficients satisfy the conditions in the basic theorem in [\cite{Gal'Chuk1978}, pp. 756-757] to the underlying semi-martingale
$((%
\overbrace{1,\ldots,1}^{d}%
)_{{}}^{\intercal}t+W_{t})_{t\in\lbrack0,T]}^{{}}$ (see also Lemma 7.1 in \cite{Tang03}). So it admits a unique strong solution $q(\cdot)$ up to an evanescent set. Moreover, set
\[
\tilde{q}_{t}= a_{0}h_{y}(\bar{\xi},\bar{Y}_{0})\exp\left\{\int_{0}^{t} g_{y}(s)ds\right\}\mathcal{E}\left(\int_{0}^{t}g_{z}(s)dW_{s}\right),\ \ t \in [0,T].
\]
Then, noting $\tilde{q}(\cdot)$ is continuous and applying It\^{o}'s lemma to $\tilde{q}_{t}$ on $[0,T]$, one can verify that $\tilde{q}(\cdot)=q(\cdot)$, $\mathbb{P}$-a.s.. On the other hand, for any $\beta \in (1,\bar{p})$, recalling Remark \ref{remark-C-uniform} and using reverse H\"{o}lder's inequality, we obtain
$\mathbb{E}\left[
\sup_{t\in\lbrack0,T]}\left\vert \tilde{q}_{t}\right\vert ^{\beta}\right]
\leq C$, where $C>0$ depends on $L$, $T$, $\left\Vert g_{y}\right\Vert
_{\infty}$, $A$, $a_{0}$, $\beta$.

Now let us focus on the SDE which $p(\cdot)$ satisfies. Under Assumptions \ref{assum-1} and \ref{assum-2}, since $l_{x}$ and $l_{\theta}$ are bounded, it admits a unique strong solution $p(\cdot)$ and, by using a standard estimate for SDEs, we get
\[
\mathbb{E}\left[  \sup\limits_{t\in\lbrack0,T]}\left\vert p_{t}\right\vert
^{\beta}\right]  \leq C\left\{  \left\vert a_{1}\right\vert ^{\beta
}+\mathbb{E}\left[  \left(  \int_{0}^{T}\left\vert g_{x}(t)q_{t}\right\vert
dt\right)  ^{\beta}+\left(  \int_{0}^{T}\left\vert g_{\theta}(t)q_{t}%
\right\vert ^{2}dt\right)  ^{\frac{\beta}{2}}\right]  \right\}  .
\]
The right-hand side of the above inequality is finite since we can show that, for any given $\beta \in (1,\bar{p})$ and any $\beta_{0} \in (\beta, \bar{p})$, by Corollary \ref{Zu-bounded} and using H\"{o}lder's inequality,
\[%
\begin{array}
[c]{rl}
& \mathbb{E}\left[  \left(  \int_{0}^{T}\left\vert g_{\theta}(t)q_{t}%
\right\vert ^{2}dt\right)  ^{\frac{\beta}{2}}\right]  \\
\leq & L^{\beta}\mathbb{E}\left[  \sup\limits_{t\in\lbrack0,T]}\left\vert
q_{t}\right\vert ^{\beta}\left(  \int_{0}^{T}\left(  1+\left\vert \bar{Y}%
_{t}\right\vert +\left\vert \bar{Z}_{t}\right\vert \right)  ^{2}dt\right)
^{\frac{\beta}{2}}\right]  \\
\leq & L^{\beta}\left(  \mathbb{E}\left[  \sup\limits_{t\in\lbrack
0,T]}\left\vert q_{t}\right\vert ^{\beta_{0}}\right]  \right)  ^{\frac{\beta
}{\beta_{0}}}\left(  \mathbb{E}\left[  \left(  \int_{0}^{T}\left(
1+\left\vert \bar{Z}_{t}\right\vert \right)  ^{2}dt\right)  ^{\frac{\beta
\beta_{0}}{2(\beta_{0}-\beta)}}\right]  \right)  ^{\frac{\beta_{0}-\beta
}{\beta_{0}}}\\
\leq & C,
\end{array}
\]
where $C>0$ depends on $L$, $T$, $\left\Vert g_{y}\right\Vert
_{\infty}$, $A$, $a_{0}$, $\beta$, $\beta_{0}$. The other term in that inequality can be estimated similarly. The proof is complete.
\end{proof}

\begin{theorem}
\label{thm-SMP}
Let Assumptions \ref{assum-1} and \ref{assum-2} hold. If $\bar{\xi}$ is optimal to (\ref{state-eq-new})-(\ref{cost-functional}), then there exist $a_{1} \in \mathbb{R}^{n}$ and $a_{0} \in \mathbb{R}$ with $a_{0}\geq 0$, $\left \vert a_{0} \right \vert^{2} + \left \vert a_{1} \right \vert^{2} \neq 0$ such that
\begin{equation}
\left\langle p_{T}+q_{T}\Phi_{x}(\bar{\xi})+a_{0}h_{x}(\bar{\xi},\bar{Y}%
_{0}),v-\bar{\xi}\right\rangle \geq0,\text{ \ }\forall v\in K,\text{
\ }\mathbb{P}\text{-a.s.},\label{ineq-SMP}%
\end{equation}
where $\left(  p(\cdot),q(\cdot)\right)  $ uniquely solves (\ref{adj-eq}).
\end{theorem}

\begin{proof}
For $\xi\in\mathcal{U}_{ad}$, let $\left(  \hat{X}(\cdot),\hat{\theta
}(\cdot)\right)  $, $\left(  \hat{Y}(\cdot),\hat{Z}(\cdot)\right)  $ be the
solution to (\ref{new-form-x1}), (\ref{new-form-y1}) respectively. Applying It\^{o}'s lemma to $\left\langle
p_{t},\hat{X}_{t}\right\rangle +q_{t}\hat{Y}_{t}$ on $[0,T]$ yields%
\begin{equation}
d\left[  \left\langle p_{t},\hat{X}_{t}\right\rangle +\left\langle q_{t}%
,\hat{Y}_{t}\right\rangle \right]  =\Gamma_{t}dW_{t},\label{Ito-lemma-applied}%
\end{equation}
where%
\begin{equation}
\Gamma_{t}=p_{t}^{\intercal}\hat{\theta}_{t}+q_{t}\left(  \hat{Z}_{t}%
+g_{z}(t)\hat{Y}_{t}\right)  +\hat{X}_{t}^{\intercal}\left[  l_{\theta
}(t)p_{t}+g_{\theta}(t)q_{t}\right]  ,\text{ }t\in\lbrack
0,T].\label{def-Gamma}%
\end{equation}
We claim that $\mathbb{E}\left[  \left(  \int_{0}^{T}\left\vert \Gamma
_{t}\right\vert ^{2}dt\right)  ^{\frac{1}{2}}\right]  <\infty$. Set $\beta
_{1}:=3\bar{p}(\bar{p}+2)^{-1}$, $\beta_{1}^{\ast}:=\beta_{1}(\beta
_{1}-1)^{-1}$, $\beta_{2}:=\frac{7}{4}\bar{p}(\bar{p}-1)^{-1}$, where $\bar
{p}$ is defined by (\ref{p-bar}). It is easy to check that $1<\beta_{1}<\bar{p}$ and
$\bar{p}^{\ast}<\beta_{1}^{\ast}<\beta_{2}<2\bar{p}^{\ast}$. Actually, in (\ref{def-Gamma}),
we can show that%
\[%
\begin{array}
[c]{rl}
& \mathbb{E}\left[  \left(  \int_{0}^{T}\left\vert q_{t}g_{z}(t)\hat{Y}%
_{t}\right\vert ^{2}dt\right)  ^{\frac{1}{2}}\right]  \\
\leq & 2L\mathbb{E}\left[  \sup\limits_{t\in\lbrack0,T]}\left\vert
q_{t}\right\vert \left(  \int_{0}^{T}\left(  1+\left\vert \bar{Z}%
_{t}\right\vert ^{2}\right)  \left\vert \hat{Y}_{t}\right\vert ^{2}dt\right)
^{\frac{1}{2}}\right]  \\
\leq & 2L\left(  \mathbb{E}\left[  \sup\limits_{t\in\lbrack0,T]}\left\vert
q_{t}\right\vert ^{\beta_{1}}\right]  \right)  ^{\frac{1}{\beta_{1}}}\left(
\mathbb{E}\left[  \sup\limits_{t\in\lbrack0,T]}\left\vert \hat{Y}%
_{t}\right\vert ^{\beta_{1}^{\ast}}\left(  \int_{0}^{T}\left(  1+\left\vert
\bar{Z}_{t}\right\vert ^{2}\right)  dt\right)  ^{\frac{\beta_{1}^{\ast}}{2}%
}\right]  \right)  ^{\frac{1}{\beta_{1}^{\ast}}}\\
\leq & 2L\left(  \mathbb{E}\left[  \sup\limits_{t\in\lbrack0,T]}\left\vert
q_{t}\right\vert ^{\beta_{1}}\right]  \right)  ^{\frac{1}{\beta_{1}}}\left(
\mathbb{E}\left[  \sup\limits_{t\in\lbrack0,T]}\left\vert \hat{Y}%
_{t}\right\vert ^{\beta_{2}}\right]  \right)  ^{\frac{1}{\beta_{2}}}\left(
\mathbb{E}\left[  \left(  \int_{0}^{T}\left(  1+\left\vert \bar{Z}%
_{t}\right\vert ^{2}\right)  dt\right)  ^{\frac{\beta_{1}^{\ast}\beta_{2}%
}{2(\beta_{2}-\beta_{1}^{\ast})}}\right]  \right)  ^{\frac{\beta_{2}-\beta
_{1}^{\ast}}{\beta_{1}^{\ast}\beta_{2}}}%
\end{array}
\]
by using H\"{o}lder's inequality. Then it follows from reverse H\"{o}lder's
inequality, Corollary \ref{Zu-bounded} and Lemma \ref{adj-1st-lem} that $\mathbb{E}\left[  \left(  \int%
_{0}^{T}\left\vert q_{t}g_{z}(t)\hat{Y}_{t}\right\vert ^{2}dt\right)
^{\frac{1}{2}}\right]  <\infty$. The other terms in (\ref{def-Gamma}) can be estimated similarly, so
$\int_{0}^{\cdot}\Gamma_{t}dW_{t}$ is a true martingale on $[0,T]$.

Integrating (\ref{Ito-lemma-applied}) from $0$ to $T$, taking expectation and using the
variational inequality (\ref{variational-inequality}), we obtain $\mathbb{E}\left[  \int_{0}^{T}\Gamma
_{t}dW_{t}\right]  =0$ and
\[%
\begin{array}
[c]{rl}
& \mathbb{E}\left[  \left\langle p_{T}+q_{T}\Phi_{x}(\bar{\xi})+a_{0}%
h_{x}(\bar{\xi},\bar{Y}_{0}),\xi-\bar{\xi}\right\rangle \right]  \\
= & \mathbb{E}\left[  \left\langle a_{1},\hat{X}_{0}\right\rangle +a_{0}%
h_{y}(\bar{\xi},\bar{Y}_{0})\hat{Y}_{0}+a_{0}\left\langle h_{x}(\bar{\xi}%
,\bar{Y}_{0}),\xi-\bar{\xi}\right\rangle \right]  \\
\geq & 0.
\end{array}
\]
Since $\xi\in\mathcal{U}_{ad}$ is arbitrary, a standard argument yields
(\ref{ineq-SMP}). The proof is complete.
\end{proof}

Denote $\partial K$ by the boundary of $K$. Set
$
\Omega_{0}:=\left\{  \omega\in\Omega\mid\bar{\xi}(\omega)\in\partial
K\right\}
$.
According to Theorem \ref{thm-SMP}, the following corollary holds.
\begin{corollary}
\label{cor-SMP}
Under assumptions of Theorem \ref{thm-SMP}, for each $v\in K$,
\[%
\begin{array}
[c]{l}%
\left\langle p_{T}+q_{T}\Phi_{x}(\bar{\xi})+a_{0}h_{x}(\bar{\xi},\bar{Y}%
_{0}),v-\bar{\xi}\right\rangle \geq0, \ \mathbb{P}\text{-a.s. on }\Omega_{0},\\
p_{T}+q_{T}\Phi_{x}(\bar{\xi})+a_{0}h_{x}(\bar{\xi},\bar{Y}_{0})=0, \ \mathbb{P}\text{-a.s. on }\Omega\setminus\Omega_{0}.
\end{array}
\]
\end{corollary}

\section{An application to a robust recursive utility maximization problem with bankruptcy prohibition}
There are $d+1$ investment instruments in the market. One of the instruments
is a bank account (free risk); the others are stocks. The price processes are
described by the following equations:%
\[
\left\{
\begin{array}
[c]{rl}%
dP_{t}^{0}= & P_{t}^{0}r_{t}dt,\\
P_{0}^{0}= & \kappa_{0}>0,\\
dP_{t}^{i}= & P_{t}^{i}\left[  b_{t}^{i}dt+\sum\limits_{j=1}^{d}\sigma
_{t}^{ij}dW_{t}^{j}\right]  ,\\
P_{0}^{i}= & \kappa_{i} >0,\text{ }i=1,\ldots,d,\text{ \ }t\in\lbrack0,T].
\end{array}
\right.
\]
where the interest rate $r(\cdot)$, the stock-appreciation rate $b(\cdot
):=\left(  b^{1}(\cdot),\ldots,b^{d}(\cdot)\right)  ^{\intercal}$, the the
stock-volatility $\sigma(\cdot):=\left\{  \sigma^{ij}(\cdot)\right\}  _{1\leq
i,j\leq d}\ $are all deterministic, bounded processes in suitable sizes.
Moreover, $r(\cdot)$ is assumed to be nonnegative and $\sigma(\cdot)$ is
assumed to be invertible whose inverse $\sigma^{-1}(\cdot)$ is also bounded.

An investor whose initial wealth is taken $x_{0} \geq 0$ as a primitive, decides to invest in the $i$th
stock $(i=1,\ldots,d)$ with the amount $\pi^{i}(\cdot)$. Denote $X(\cdot)$ and
$Y(\cdot)$ by the wealth process and the recursive utility of the investor,
respectively. Let $B_{t}:=\left(  b_{t}^{1}-r_{t},\ldots,b_{t}^{d}%
-r_{t}\right)  $; $\pi(\cdot)=\left(  \pi^{1}(\cdot),\ldots,\pi^{d}%
(\cdot)\right)  ^{\intercal}$ be the portfolio process and $\phi(\cdot
)=\sigma^{-1}(\cdot)B^{\intercal}(\cdot)$ be the risk premium process. Here, we suppose 
that the instantaneous consumption rate $c(\cdot)$ depends only on the wealth process $X(\cdot)$. Thus, by the conventional calculation, the wealth process $X(\cdot)$ satisfies the
following SDE:%
\begin{equation}
\left\{
\begin{array}
[c]{rl}%
dX_{t}^{\pi}= & \left[\left(  r_{t}X_{t}^{\pi}+\pi_{t}^{\intercal}\sigma_{t}\phi
_{t}\right) - c(X_{t}^{\pi}) \right] dt+\pi_{t}^{\intercal}\sigma_{t}dW_{t},\text{ \ }t\in
\lbrack0,T],\\
X_{0}^{\pi}= & x_{0},
\end{array}
\right.  \label{wealth-eq}%
\end{equation}
where the consumption function $c$ is nonnegative and continuous differentiable.
The recursive utility of the investor's wealth $X^{\pi}(\cdot)$ is
described by the following BSDE:
\begin{equation}
\left\{
\begin{array}
[c]{rl}%
dY_{t}^{\pi}= & -g(t,X_{t}^{\pi},Y_{t}^{\pi},Z_{t}^{\pi})dt+\left( Z_{t}^{\pi} \right)^{\intercal} dW_{t},\text{
\ }t\in\lbrack0,T],\\
Y_{T}^{\pi}= & \Phi(X_{T}^{\pi}),
\end{array}
\right.  \label{recursive-eq}%
\end{equation}
where $g$ and $\Phi$ satisfy Assumption \ref{assum-1}.

Our problem is that an investor chooses portfolio $\pi(\cdot)$ so as to maximize the recursive utility $Y_{0}^{\pi}$ of his wealth $X^{\pi}(\cdot)$ 
with bankruptcy prohibition. Equivalently, we put $h(x,y)=-y$ since the control problem section 3 is to minimize the cost functional, that is,
\[
\begin{array}
[c]{rl}%
\text{minimize} & \text{ }J(\pi(\cdot))=-Y_{0}^{\pi}\\
\text{subject to} & \text{ }\pi(\cdot)\in\mathcal{M}_{\mathcal{F}}^{2,4\bar
{p}^{\ast}}([0,T];\mathbb{R}^{d}),\text{ \ }X_{t}^{\pi}\geq0, \  t \in [0,T],
\end{array}
\]
where $\bar{p}^{\ast}$ is the exponential conjugate of $\bar{p}$ introduced by (\ref{p-bar}). 
Using the method in section 3, let $\theta_{t}=\sigma_{t}^{\intercal}\pi
_{t}$, then we get the following equivalent control system
\[
\left\{
\begin{array}
[c]{rl}%
dX_{t}^{\xi}= & -l(t,X_{t}^{\xi},\theta_{t}^{\xi})dt+\left(\theta_{t}^{\xi}\right)^{\intercal}
dW_{t},\text{ \ }t\in\lbrack0,T],\\
X_{T}^{\xi}= & \xi,\\
dY_{t}^{\xi}= & -g(t,X_{t}^{\xi}, Y_{t}^{\xi},Z_{t}^{\xi})dt+\left(Z_{t}^{\xi} \right)^{\intercal} dW_{t},\text{
\ }t\in\lbrack0,T],\\
Y_{T}^{\xi}= & \Phi(\xi),
\end{array}
\right.
\]
where $l(t,x,\theta)=-x r_{t}-\theta^{\intercal}\phi_{t} + c(x)$. 
As $l(t,0,0) \geq 0$, it ensures by the comparison theorem of BSDEs with Lipschitz generators (\cite{Karoui-Peng-Quenez-1997}, Theorem 2.2)
that if the terminal wealth $X_{T}^{\pi} \in K:=[0,+\infty)$ then the wealth process $X_{t}^{\pi} \geq 0$, $\mathbb{P}$-a.s. $t \in [0,T]$.
Therefore, the equivalent objective is
\[%
\begin{array}
[c]{rl}%
\text{minimize} & \text{ }J(\xi)=-Y_{0}^{\xi}\\
\text{subject to} & \text{ }\xi\in L_{\mathcal{F}_{T}}^{4\bar{p}^{\ast}%
}(\Omega;\mathbb{R}),\text{ \ }\xi\geq0,\text{ \ }X_{0}^{\xi}=x_{0}.
\end{array}
\]
Let $\bar{\xi}$ be an optimal terminal wealth and $\bar{X}(\cdot)$, $\bar
{Y}(\cdot)$ be the wealth process and the utility associated with $\bar{\xi}$,
respectively. According to (\ref{adj-eq}), the adjoint system is%
\[
\left\{
\begin{array}
[c]{rl}%
dp_{t}= & \left[\left(c_{x}(\bar{X}_{t})-r_{t}\right)p_{t}+g_{x}(t)q_{t}\right]dt-\phi_{t}^{\intercal}p_{t}dW_{t},\text{ \ }t\in
\lbrack0,T],\\
p_{0}= & a_{1},\\
dq_{t}= & g_{y}(t)q_{t}dt+g_{z}^{\intercal}(t)q_{t}dW_{t},\text{ \ }%
t\in\lbrack0,T],\\
q_{0}= & -a_{0},
\end{array}
\right.
\]
where $g_{w}(t)=g_{w}(t,\bar{Y}_{t},\bar{Z}_{t})$, $w=x,y,z$; $a_{0}$, $a_{1}%
\in\mathbb{R}$ with $a_{0}\geq0$ and $\left\vert a_{0}\right\vert^{2} +\left\vert
a_{1}\right\vert^{2} \neq0$. Note that in this case the mapping $h(x,y)=-y$
which leads to $q_{0}=-a_{0}$. The solution is
\[
\left\{
\begin{array}
[c]{rl}%
p_{t}= & \left[a_{1}+\int_{0}^{t} g_{x}(s)q_{s} \Lambda_{s} ds \right] \Lambda_{t}^{-1},\\
q_{t}= & -a_{0}\exp\left\{  \int_{0}^{t}\left(  g_{y}(s)-\frac{1}{2}\left\vert
g_{z}(s)\right\vert ^{2}\right)  ds+\int_{0}^{t}g_{z}^{\intercal}%
(s)dW_{s}\right\}  ,\text{ \ }t\in\lbrack0,T],
\end{array}
\right.
\]
where $\Lambda_{t} = \exp\left\{  \int_{0}^{t}\left[  r_{s}-c^{\prime}(\bar{X}_{s})+\frac{1}{2}\left\vert
\phi_{s}\right\vert ^{2}\right]  ds+\int_{0}^{t}\phi_{s}^{\intercal}dW_{s}\right\} $, $t \in [0,T]$.

Set $\Omega_{0}=\left\{  \omega\in\Omega\mid\bar{\xi}(\omega)=0\right\}  $ and
suppose Assumptions \ref{assum-1} and \ref{assum-2}. Then, by Theorem \ref{thm-SMP}, we deduce that there
exist constants $a_{0}$, $a_{1}\in\mathbb{R}$ with $a_{0}\geq0$ and
$\left\vert a_{0}\right\vert^{2} +\left\vert a_{1}\right\vert^{2} \neq0$, such that%
\begin{equation}
\label{example-SMP}
\begin{array}
[c]{l}%
p_{T}+q_{T}\Phi_{x}(\bar{\xi})\geq0, \ \mathbb{P}\text{-a.s. on }\Omega_{0},\\
p_{T}+q_{T}\Phi_{x}(\bar{\xi})=0, \ \mathbb{P}\text{-a.s.
on }\Omega\setminus\Omega_{0}.
\end{array}
\end{equation}
Once $c$, $g$, and $\Phi$ are given, we will derive the expression of the optimal
control $\bar{\xi}$. For example, we take $c(x)=\alpha x$, $\Phi(x)=\arctan(x)$, $g(t,x,y,z)=U(c(x)) - \beta y -
\frac{\gamma}{2}\left\vert z\right\vert ^{2}$, where $\alpha, \beta, \gamma>0$, and $U(\cdot)$ is a bounded utility function
that has a bounded and continuous derivative. We claim that the optimal terminal wealth can be represented as%
\begin{equation}
    \label{bar-xi-express}
    \bar{\xi}=\sqrt{\left(  -\frac{q_{T}}{p_{T}}-1\right)  ^{+}},
\end{equation}
where
\[
\begin{array}
[c]{rl}%
p_{T}= & \left[a_{1}+\alpha \int_{0}^{T} U^{\prime}(\alpha \bar{X}_{s})q_{s} \Lambda_{s} ds \right] \Lambda_{T}^{-1}  ,\\
q_{T}= & -a_{0}\exp\left\{ -\beta T - \frac{\gamma^{2}}%
{2}\int_{0}^{T}  \left\vert \bar{Z}_{s}\right\vert ^{2}  ds-\gamma\int_{0}^{T}\bar
{Z}_{s}^{\intercal}dW_{s}\right\}  
\end{array}
\]
with $\Lambda_{t} = \exp\left\{  \int_{0}^{t}\left[  r_{s}-\alpha+\frac{1}{2}\left\vert
\phi_{s}\right\vert ^{2}\right]  ds+\int_{0}^{t}\phi_{s}^{\intercal}dW_{s}\right\} $, $t \in [0,T]$.
We provide a sketch of the proof.

\textbf{Case 1: }$a_{0}>0$. In this case, we deduce that $q_{T}<0$. Hence, from (\ref{example-SMP}), on the one hand we have, 
on $\Omega_{0}$, $\mathbb{P}$-a.s.,
\[
p_{T}+q_{T}\geq0\Longrightarrow p_{T}\geq-q_{T}>0\Longrightarrow\left\{
\begin{array}
[c]{l}%
-\frac{q_{T}}{p_{T}}-1\leq0,\\
a_{1}+\alpha \int_{0}^{T} U^{\prime}(\alpha \bar{X}_{s})q_{s} \Lambda_{s} ds > 0.
\end{array}
\right.
\]
On the other hand, on $\Omega\setminus\Omega_{0}$, $\mathbb{P}$-a.s.,
\[
p_{T}+q_{T}\Phi_{x}(\bar{\xi})=0\Longrightarrow-q_{T}>p_{T}=-q_{T}\Phi
_{x}(\bar{\xi})>0\Longrightarrow\left\{
\begin{array}
[c]{l}%
\left\vert \bar{\xi}\right\vert ^{2}=-\frac{q_{T}}{p_{T}}-1>0,\\
a_{1}+\alpha \int_{0}^{T} U^{\prime}(\alpha \bar{X}_{s})q_{s} \Lambda_{s} ds > 0.
\end{array}
\right.
\]

\textbf{Case 2: }$a_{0}=0$. In this case, we deduce that $a_{1}\neq0$ and 
$q_{t}=0$, $t \in [0,T]$. Hence, from (\ref{example-SMP}), we have $p_{T}\geq0$ on 
$\Omega_{0}$ and $p_{T}=0$ on $\Omega\setminus\Omega_{0}$, $\mathbb{P}$-a.s.. But $a_{1}\neq0$ implies that $p_{T}>0$. 
So we deduce that $\bar{\xi}=0$ $\mathbb{P}$-a.s. and $a_{1}>0$. 

In summary, for both cases, we have (\ref{bar-xi-express}).

\begin{remark}
In view of the stochastic differential utility, the above example is closely related to the robust expected utility model studied in \cite{Quenez-2003}. The generator $g(t,x,y,z)=U(c(x))-\beta
y-\frac{\gamma}{2}\left\vert z\right\vert ^{2}$ can be interpreted as
an intertemporal aggregator where $U(c(x))-\beta y$ corresponds to the standard expected additive utility in continuous-time, and 
$\gamma>0$ is the risk-averse parameter which reflects the issue of robustness in portfolio decision (see \cite{Skiadas-2001} for more details).
\end{remark}

\end{document}